\newtheorem{theorem}{Theorem}
\newtheorem{lemma}[theorem]{Lemma}
\newtheorem{proposition}[theorem]{Proposition}
\newtheorem{remark}[theorem]{Remark}
\numberwithin{equation}{section}
\renewcommand{\leq}{\leqslant}
\renewcommand{\geq}{\geqslant}
\begin{document}
\pretolerance=10000

%\hfill\today\bigskip

\title[Nehari method for locally Lipschitz functionals]{Nehari method for locally Lipschitz functionals with examples in problems in the space of bounded variation functions}

\thanks{M.T.O.P. is supported by Fapesp 2014/16136-1, 2015/12476-5, 2017/01756-2 and CNPq 442520/2014-0. G.M.F. is supported by CNPq and FAPESP}

%%%%%%%%%%%%%%%%%%%%%%%%%%%%%%%%%%%%%%%%%%%%%%%%%%%%%%%%%%%%%%%%%%%%%%%

\author[G.M. Figueiredo]{Giovany M. Figueiredo}
\address{Departamento de Matem\'atica, Universidade de Bras\'ilia,
70910-900, Bras\'ilia, Brazil}
\email{\tt giovany@unb.br}

\author[M.T.O. Pimenta]{Marcos T.  O. Pimenta}
\address{Departamento de Matem\'atica e Computa\c{c}\~ao,
          Faculdade de Ci\^encias e Tecnologia, Universidade Estadual Paulista - UNESP,
          19060-900 Presidente Prudente - SP, Brazil}
\email{\tt pimenta@fct.unesp.br}

\keywords{1-Laplacian, mean-curvature operator, bounded variation functions, Nehari method.\\
\phantom{aa} 2010 AMS Subject Classification: 35J62, 35J93.}

%%%%%%%%%%%%%%%%%%%%%%%%%%%%%%%%%%%%%%%%%%%%%%%%%%%%%%%%%%%%%%%%%%%%%%%%

\begin{abstract}
In this work we prove some abstract results about the existence of a minimizer for locally Lipschitz functionals, without any assumption of homogeneity, over a set which has its definition inspired in the Nehari manifold. As applications we present a result of existence of ground state bounded variation solutions of problems involving the 1-Laplacian and the mean-curvature operator, where the nonlinearity satisfies mild assumptions.

\end{abstract}

\maketitle

\tableofcontents

\section{Introduction and some abstract results}

\hspace{.5cm} 

Since its appearance, the Nehari method has been used in a number of situations in order to get ground state solutions to elliptic problems. In \cite{SzulkinWeth} Szulkin and Weth have elegantly described in a systematic way the real essence of the Nehari method. Their main theorem, describing in an abstract framework sufficient conditions to get ground state solutions, has been applied to study problems like
$$
\left\{
\begin{array}{rl}
\displaystyle - \Delta_p u - \lambda|u|^{p-2}u & = f(x,u) \quad \mbox{in
$\Omega$,}\\
u & = 0 \quad \mbox{on
$\partial\Omega$,}\\
\end{array} \right.
$$
where $\lambda < \lambda_1$, $\lambda_1$ the first eigenvalue of $-\Delta_p$ and $f$ a subcritical power-type nonlinearity. Also, they considered elliptic problems in $\mathbb{R}^N$ like
$$
\left\{
\begin{array}{ll}
\displaystyle - \Delta u + V(x)u & = f(x,u) \quad \mbox{in
$\mathbb{R}^N$,}\\
u(x) \to 0, & |x| \to \infty,
\end{array} \right.
$$
under some conditions on $V$ and $f$.

These two problems (and other two also considered in \cite{SzulkinWeth}) have a common feature which allows the use of the standard Nehari method described by Szulkin and Weth. In fact, the energy functionals associated to them has the "principal part" $p-$homogenous. This condition, by its side, has been dropped out in the paper of Figueiredo and Ramos \cite{FigueiredoQuoirin}, in which they conclude the same as Szulkin and Weth, but with weaker assumptions. The applications of Figueiredo and Ramos results treat with non homogenous elliptic problems. They consider, for instance, quasilinear elliptic problems which includes the $p-q$ Laplacian operator, Kirchhoff-type problems and an anisotropic equation (see \cite{FigueiredoQuoirin} for details).

Leaving aside the questions about the homogeneity, a common feature which seems to never be dropped out in results like these, is the differentiability of $\Phi$. In fact this is a natural assumption that researchers have always considered when dealing with sets which resemble a Nehari manifold, since some directional derivatives of the functional is involved in its definition.

In this work we present some abstract results whose assumptions are enough to give a reasonable sense to a Nehari set, which contains all the critical points of functionals which are neither $C^1$, nor even differentiable and where all minimizers are critical points. More specifically, we deal with functionals defined in Banach spaces, which are written like a combination of a locally Lipschitz and a $C^1$ functional and satisfy some natural assumptions. In fact, despite the lack of smoothness in our functional, we ask it to have at least directional derivatives in some directions which are enough to get the result. Although this can sound a little bit artificial, in a lot of examples this assumption is satisfied. For example, as we will show later in Section \ref{applications}, in problems involving the $1-$Laplacian (the formal limit of $\Delta_p$ as $p \to 1$), the mean curvature operator and some other operators derived from these ones, the associated energy functionals are not differentiable in all directions.

 Our main results are the following.

\begin{theorem}
\label{theorem1}
Let $E,F$ be Banach spaces, $E$ not necessarily reflexive, $E$ compactly embedded into $F$ and such that for all bounded sequence $(u_n) \subset E$ such that $u_n \to u$ in $F$, it holds that $u \in E$. Let $\Phi, I_0: E \to \mathbb{R}$, $I: F \to \mathbb{R}$ be functionals such that $\Phi = I_0 - I |_E$, where $I_0$ is locally Lipschitz continuous, $I \in C^1(E) \cap C^0(F)$, $I(0) = 0$ and for all $u \in E$, there exists the following limit
$$I_0'(su)u := \lim_{t \to 0} \frac{I_0(su + tu) - I_0(su)}{t}, \quad \forall s \in \mathbb{R}.$$
Suppose also the following conditions are satisfied:
\begin{itemize}
\item [$i)$] $I_0$ is lower semicontinuous in the topology of $F$ in the following sense, if $(u_n)$ is a bounded sequence in $E$, and $u \in E$ are such that, up to a subsequence, $u_n \to u$ in $F$, then $\displaystyle I_0(u) \leq \liminf_{n \to \infty} I_0(u_n)$;
\item [$ii)$] There exist $\rho, \alpha_0 > 0$, such that $\Phi(u) \geq \alpha_0 > \Phi(0)$, for every $u \in E$ with $\|u\| = \rho$;
\item [$iii)$] $\forall u \in E$, $\displaystyle \Phi(u) \geq \|u\| - I(u)$;
\item [$iv)$] $\displaystyle t \mapsto I_0'(tu)u$ is non-decreasing in $(0,+\infty)$ and $\displaystyle t \mapsto I'(tu)u$ is increasing in $(0,+\infty)$;
\item [$v)$] For each bounded sequence $(v_n) \subset E$ such that $v_n \to v \neq 0$ in $F$, it follows that 
$$\displaystyle \lim_{t \to \infty} \frac{\Phi(tv_n)}{t} = -\infty, \quad \mbox{uniformly for $n \in \mathbb{N}$.}$$
\end{itemize}
Then, the infimum of $\Phi$ on the following set
$$\mathcal{N} := \{u \in E\backslash\{0\}; \, \, I_0'(u)u = I'(u)u\}$$
is achieved.
\end{theorem}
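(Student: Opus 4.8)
The plan is to run the Nehari minimization scheme adapted to the non-smooth, non-homogeneous setting, organizing everything around a fiber-map analysis of the rays $t\mapsto\Phi(tu)$. For fixed $u\in E\setminus\{0\}$ I would study $\varphi_u(t):=\Phi(tu)$ on $(0,\infty)$. Because the directional derivative $I_0'(su)u$ exists for every $s$, one has $\frac{d}{dt}I_0(tu)=I_0'(tu)u$, and together with $I\in C^1(E)$ this makes $\varphi_u$ differentiable with $\varphi_u'(t)=I_0'(tu)u-I'(tu)u$; moreover the directional derivative is positively homogeneous in its direction, $I_0'(tu)(tu)=t\,I_0'(tu)u$, so for $t>0$ the membership $tu\in\mathcal N$ is equivalent to $\varphi_u'(t)=0$. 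Using $(ii)$ (the ray rises above $\Phi(0)$ at $t=\rho/\|u\|$) and $(v)$ (which forces $\varphi_u(t)\to-\infty$), the continuous function $\varphi_u$ attains its maximum over $[0,\infty)$ at an interior point $t_u>0$, whence $\varphi_u'(t_u)=0$ and $t_u u\in\mathcal N$. The key lemma I aim to record is \emph{ray-maximality}: $\mathcal N\neq\emptyset$, and every $v\in\mathcal N$ satisfies $\Phi(v)=\max_{t>0}\Phi(tv)$. I expect this to be the main obstacle: with only $I_0'(tu)u$ nondecreasing and $I'(tu)u$ increasing, the difference $\varphi_u'$ need not itself be monotone, so the delicate point is to use $(iv)$ (through the decomposition $\Phi=I_0-I$ and the convexity-type information $I_0$ carries) to force $\varphi_u'$ to change sign only once, making $t_u$ the unique critical point and pinning the maximum at the Nehari point.

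Granting ray-maximality, I would first fix the value $c:=\inf_{\mathcal N}\Phi$. Rewriting $(iii)$ as $I_0(u)\ge\|u\|$ and combining ray-maximality with $(ii)$ gives, for $v\in\mathcal N$, $\Phi(v)=\max_{t}\Phi(tv)\ge\Phi\big((\rho/\|v\|)v\big)\ge\alpha_0$, so $c\ge\alpha_0>0$ is finite and positive. Next I take $(u_n)\subset\mathcal N$ with $\Phi(u_n)\to c$ and show $(u_n)$ is bounded in $E$. Suppose $\|u_n\|\to\infty$ and set $w_n=u_n/\|u_n\|$; by the compact embedding and the closure hypothesis a subsequence satisfies $w_n\to w$ in $F$ with $w\in E$. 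If $w\neq0$, condition $(v)$ yields $\Phi(u_n)/\|u_n\|=\Phi(\|u_n\|w_n)/\|u_n\|\to-\infty$, contradicting $\Phi(u_n)\to c$ with $\|u_n\|\to\infty$. If $w=0$, then for each fixed $s>0$ ray-maximality gives $\Phi(u_n)\ge\Phi(sw_n)\ge s\|w_n\|-I(sw_n)=s-I(sw_n)$, and since $I\in C^0(F)$ and $w_n\to0$ in $F$ we obtain $\liminf_n\Phi(u_n)\ge s$ for every $s$, again impossible. Hence $(u_n)$ is bounded.

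Finally I pass to the limit. Along a subsequence $u_n\to u$ in $F$ with $u\in E$. To see $u\neq0$: since $\Phi$ is continuous on $E$ and $\Phi(0)<\alpha_0\le c$, no subsequence of $\|u_n\|$ tends to $0$, so $\|u_n\|\ge\delta>0$ eventually; were $u_n\to0$ in $F$, then $v_n:=u_n/\|u_n\|\to0$ in $F$ (because $\|v_n\|_F\le\|u_n\|_F/\delta$), and exactly as above $\Phi(u_n)\ge s-I(sv_n)\to s$ for every $s$, contradicting $\Phi(u_n)\to c$. Thus $u\neq0$, and the fiber analysis furnishes $t_u>0$ with $t_u u\in\mathcal N$. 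Now I combine the lower semicontinuity $(i)$, the continuity of $I$ on $F$, and ray-maximality: since $t_u u_n\to t_u u$ in $F$ with $(t_u u_n)$ bounded in $E$,
\[
\Phi(t_u u)=I_0(t_u u)-I(t_u u)\le\liminf_n\big(I_0(t_u u_n)-I(t_u u_n)\big)=\liminf_n\Phi(t_u u_n)\le\liminf_n\Phi(u_n)=c,
\]
where the last inequality uses $\Phi(t_u u_n)\le\max_t\Phi(tu_n)=\Phi(u_n)$. Since $t_u u\in\mathcal N$ forces $\Phi(t_u u)\ge c$, I conclude $\Phi(t_u u)=c$, so the infimum is attained at $t_u u$.
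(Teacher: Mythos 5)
Your proposal follows essentially the same route as the paper's proof: the fiber maps $\gamma_u(t)=\Phi(tu)$ with $\gamma_u'(t)=I_0'(tu)u-I'(tu)u$, non-emptiness of $\mathcal N$ from $ii)$ and $v)$, the ray-maximality identity $\Phi(v)=\max_{t\ge0}\Phi(tv)$ for $v\in\mathcal N$, boundedness of the minimizing sequence via the dichotomy $v=0$ versus $v\neq0$, exclusion of the limit $u=0$, and the closing chain combining $i)$ with the continuity of $I$ on $F$. All of these steps are carried out correctly and match the paper; your handling of the case $u=0$ (normalizing by $\|u_n\|$ and using $\|v_n\|_F\le\|u_n\|_F/\delta$ rather than working directly with $t\|u_n\|\ge t\delta$ as the paper does) is a harmless variant of the same computation.

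The one genuine gap is precisely the step you flag and then leave open: you never prove ray-maximality, you only announce that $iv)$ should "force $\varphi_u'$ to change sign only once.'' Since every later step leans on $\Phi(v)=\max_t\Phi(tv)$ for $v\in\mathcal N$ --- the bound $c\ge\alpha_0$, both halves of the boundedness argument, the exclusion of $u=0$, and the final inequality $\Phi(t_uu_n)\le\Phi(u_n)$ --- the proof is incomplete without it. The paper settles it by uniqueness of the fiber critical point: if $\gamma_w'(t)=\gamma_w'(s)=0$, subtract the two identities $I_0'(tw)w=I'(tw)w$ and $I_0'(sw)w=I'(sw)w$ and invoke $iv)$ to conclude $t=s$; uniqueness plus the interior maximum produced by $ii)$ and $v)$ then pins the maximum at the Nehari point. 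You should at least record this argument. That said, your suspicion that the point is delicate is well founded: the subtraction gives $I_0'(tw)w-I_0'(sw)w=I'(tw)w-I'(sw)w$, and for $t>s$ both sides are nonnegative under $iv)$ as literally stated, so the desired contradiction requires that $t\mapsto I_0'(tw)w$ not strictly increase by the same amount as $t\mapsto I'(tw)w$ between the two roots --- in other words, one really needs injectivity of $t\mapsto I_0'(tw)w-I'(tw)w$ at its zero, which is immediate when $I_0'(tw)w$ is constant in $t$ (as in the $1$-Laplacian application) but is exactly the point that must be checked in general. Writing this verification out, rather than gesturing at it, is what separates your sketch from a complete proof.
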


It is important to say that, since Theorem \ref{theorem1} is going to be applied in non-reflexive Banach spaces, it is not possible to work with weak convergence in $E$, this way, being necessary to introduce the space $F$.

Since we are dealing with locally Lipschitz functionals, before saying something about the existence of critical points, we have to give the precise definition of it. We say that $u_0 \in E$ is a critical point of $\Phi$, if $0 \in \partial \Phi(u_0)$, where $ \partial \Phi(u_0)$ denotes the generalized gradient of $\Phi$ in $u_0$, as defined in \cite{Chang} for instance.

Finally, our last theorem ensures that all minimizers of $\Phi$ on the Nehari set, $\mathcal{N}$, are in fact critical points.
\begin{theorem}
\label{theorem3}
Suppose all conditions of Theorem \ref{theorem1} hold, then if $u_0 \in \mathcal{N}$ is such that $\displaystyle \Phi(u_0) = \min_{\mathcal{N}}\Phi$, then $u_0$ is a critical point of $\Phi$ in $E$.
\end{theorem}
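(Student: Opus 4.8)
The plan is to argue by contradiction. Write $\Phi = I_0 - I|_E$ and recall that, since $I \in C^1(E)$, Clarke's generalized gradient splits as $\partial\Phi(u_0) = \partial I_0(u_0) - I'(u_0)$, so $u_0$ is critical exactly when $I'(u_0) \in \partial I_0(u_0)$. Assume, seeking a contradiction, that $0 \notin \partial\Phi(u_0)$. Since $\partial\Phi(u_0)$ is a convex, weak$^*$-compact subset of $E^*$ missing the origin, a separation argument yields a direction $d \in E$ with $\|d\| = 1$ and a number $\delta > 0$ such that $\langle \xi, d\rangle \le -\delta$ for every $\xi \in \partial\Phi(u_0)$; equivalently $\Phi^0(u_0; d) \le -\delta$, where $\Phi^0$ is Clarke's generalized directional derivative. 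By the upper semicontinuity of $\partial\Phi$ (as in \cite{Chang}) there are $\varepsilon, s_0 > 0$ with
$$\Phi(u + s d) \le \Phi(u) - \tfrac{\delta}{2}\,s, \qquad u \in B(u_0,\varepsilon),\ 0 \le s \le s_0,$$
so translation in the fixed direction $d$ yields a uniform decay of $\Phi$ near $u_0$.

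Next I would analyse the ray through $u_0$. For $w \ne 0$ set $\gamma_w(t) = \Phi(tw)$, $t>0$; since the two-sided limit defining $I_0'(\cdot)(\cdot)$ exists, $\gamma_w$ is differentiable with $\gamma_w'(t) = I_0'(tw)w - I'(tw)w$, and linearity of the directional derivative in its direction gives $tw \in \mathcal N \iff \gamma_w'(t) = 0$. Conditions $ii)$ and $v)$ force $\gamma_{u_0}(t)>\Phi(0)$ for small $t>0$ and $\gamma_{u_0}(t)\to -\infty$, while $iv)$ (the fibering analysis underlying Theorem \ref{theorem1}) makes $t=1$ the unique maximiser of $\gamma_{u_0}$ on $(0,\infty)$. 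In particular $\Phi(u_0)=\max_{t>0}\Phi(tu_0)$, and one may fix $0<\bar r<1<\bar R$ and $\delta'>0$ with $\Phi(\bar r\,u_0),\,\Phi(\bar R\,u_0)\le \Phi(u_0)-\delta'$ and $\gamma_{u_0}'(\bar r)>0>\gamma_{u_0}'(\bar R)$, choosing the arc long enough that its endpoints lie outside $B(u_0,\varepsilon)$.

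Finally I would deform the compact arc and exhibit a cheaper point of $\mathcal N$. Multiplying the translation by a cut-off supported near $u_0$ produces a continuous map $\eta$ equal to the identity outside $B(u_0,\varepsilon)$, with $\Phi\circ\eta \le \Phi$ everywhere and $\Phi(\eta(v)) \le \Phi(v) - c$ on a neighbourhood of $u_0$. Applying $\eta$ to the arc $t \mapsto t u_0$, $t\in[\bar r,\bar R]$, gives a continuous curve $\zeta$ with $\max_{t}\Phi(\zeta(t)) < \Phi(u_0)$: near $t=1$ the strict decay pulls the value below $\Phi(u_0)$, while away from $t=1$ the barrier $\Phi(tu_0)\le\Phi(u_0)-\delta'$ together with $\Phi\circ\eta\le\Phi$ keeps us below $\Phi(u_0)$. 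Since the endpoints are untouched and the Nehari condition changes sign there, $\zeta$ should cross $\mathcal N$, producing $t^\dagger$ with $\zeta(t^\dagger)\in\mathcal N$ and $\Phi(\zeta(t^\dagger))<\Phi(u_0)=\min_{\mathcal N}\Phi$, the desired contradiction.

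I expect the delicate point to be precisely this crossing. In the smooth theory one reads it off from the sign change of the $C^1$ map $t \mapsto \Phi'(\zeta(t))\zeta(t)$; here the analogous quantity involves $I_0'(\cdot)(\cdot)$, which need not depend continuously on its base point, so a bare intermediate-value argument is unavailable. I would therefore keep the deformation a pure translation in the \emph{fixed} direction $d$, so the competitor curve stays of the form $t u_0 + \sigma(t)\,d$, and detect the meeting with $\mathcal N$ through the fibering map of each competitor ray, inheriting the sign data $\gamma_{u_0}'(\bar r)>0>\gamma_{u_0}'(\bar R)$ for $\sigma$ small, rather than through continuity of $I_0'$. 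Securing this — using the mild regularity that the hypothesised limit for $I_0'(su)u$ does provide — is the heart of the matter; the Clarke-gradient decay estimate and the role of $F$ (used only to supply, via Theorem \ref{theorem1}, the minimizer $u_0\in E$) are comparatively routine.
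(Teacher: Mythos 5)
Your overall strategy coincides with the paper's: argue by contradiction from $0\notin\partial\Phi(u_0)$, build a deformation $\eta$ that fixes everything outside a ball around $u_0$ and strictly lowers $\Phi$ near $u_0$, push the fibering arc $t\mapsto tu_0$ (on which $t=1$ is the unique maximiser, by $ii)$, $iv)$, $v)$) through $\eta$, and derive a contradiction from the deformed arc meeting $\mathcal N$ at a point of energy below $c=\min_{\mathcal N}\Phi$. Your construction of the deformation by separating $\partial\Phi(u_0)$ from the origin and translating in a fixed descent direction is an acceptable substitute for the quantitative deformation lemma for locally Lipschitz functionals that the paper invokes (from Dai's non-smooth deformation result), and the rest of the set-up (endpoints of the arc strictly below $c$ and outside the ball, hence fixed by $\eta$) matches the paper.

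The genuine gap is exactly the step you flag and do not close: proving that the deformed curve actually intersects $\mathcal N$. Your fallback --- ``detect the meeting with $\mathcal N$ through the fibering map of each competitor ray, inheriting the sign data'' --- amounts to showing that the map $t\mapsto t_{\zeta(t)}$ (the unique Nehari parameter of the point $\zeta(t)$) passes through the value $1$; but continuity of $w\mapsto t_w$ is precisely what is \emph{not} available here, since $I_0'(\cdot)(\cdot)$ is only assumed to exist along rays and need not depend continuously on the base point, and nothing in hypotheses $i)$--$v)$ supplies it. So the intermediate-value argument you hope to run on the competitor rays has no more footing than the one you correctly reject for $t\mapsto\Phi'(\zeta(t))\zeta(t)$. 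The paper resolves this differently: it introduces $\Psi_0(t)=\Phi'(tu_0)u_0$ and $\Psi_1(t)=\frac1t\Phi'(h(t))h(t)$ with $h=\eta\circ g$, observes that $\Psi_0$ and $\Psi_1$ agree at the two endpoints of $J=[1-\kappa/4,\,1+\kappa/4]$ because $\eta$ fixes them, and then uses the boundary-dependence of the one-dimensional Brouwer degree, $d(\Psi_1,J,0)=d(\Psi_0,J,0)\neq 0$ (the latter computed from the sign change of $\Psi_0$ across $t=1$ guaranteed by $iv)$), to produce $t\in J$ with $h(t)\in\mathcal N$. That degree-theoretic transfer from boundary data is the missing ingredient in your write-up; without it, or an equivalent continuity statement for $\Psi_1$ on the deformed arc, the contradiction is not reached.
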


As applications of these results we address the question of finding critical points of functionals involving the absolute value of the total variation of a function in the space of functions of bounded variation, $BV(\Omega)$, in a setting in which coerciveness and smoothness are lost. In fact, a lot of attention has been paid recently to the space $BV(\Omega)$, since it is the natural environment in which minimizers of many problems can be found, especially in problems involving interesting physical situations, like existence of minimal surfaces and in capillarity theory. In fact, this space turns to be the natural domain of relaxed versions of some functionals defined in usual Sobolev spaces. 

In \cite{DegiovanniMagrone}, Degiovanni and Magrone study the version of Br\'ezis-Nirenberg problem to the 1-Laplacian operator, corresponding to 
\begin{equation}
\left\{
\begin{array}{rl}
\displaystyle - \Delta_1 u & = \displaystyle  \lambda \frac{u}{|u|} + |u|^{1^*-2}u \quad \mbox{in
$\Omega$,}\\
u & = 0 \quad \mbox{on
$\partial\Omega$,}\\
\end{array} \right.
\label{Pintro1}
\end{equation}
where $1^* = N/(N-1)$ and $\displaystyle \Delta_1 u = \mbox{div}\left(\frac{\nabla u}{|\nabla u|}\right)$.  In this work, the authors extend $\Phi$ in a suitable $L^p(\Omega)$ space, which have better compactness properties, even tough the continuity of it is lost.

This kind of argument, which consists in extend the functional defined in $BV(\Omega)$, to some $L^p(\Omega)$ in order to recover the Palais-Smale condition, is generally used in dealing with $\Delta_1$ operator. For example, in \cite{Chang2}, Chang uses this approach to study the spectrum of the $1-$Laplacian operator, proving the existence of a sequence of eigenvalues.

In the first application we study the energy functional whose Euler-Lagrange equation is a relaxed form of 

\begin{equation}
\left\{
\begin{array}{rl}
\displaystyle - \Delta_1 u & = f(u) \quad \mbox{in
$\Omega$,}\\
u & = 0 \quad \mbox{on
$\partial\Omega$,}\\
\end{array} \right.
\label{P1}
\end{equation}
where $\Omega \subset \mathbb{R}^N$ is a smooth bounded domain, $N \geq 2$ and $f$ satisfies the following set of assumptions.
\begin{itemize}
\item [$(f_1)$] $f \in C^0(\mathbb{R})$;
\item [$(f_2)$] $\displaystyle f(s) = o(1)$, as $|s| \to 0$;
\item [$(f_3)$] there exist constants $c_1, c_2 > 0$ and $p \in (1,1^*)$ such that
$$|f(s)| \leq c_1 + c_2|s|^{p-1}, \quad s \in \mathbb{R};$$
\item [$(f_4)$] $\displaystyle \lim_{t \to \pm\infty}\frac{F(t)}{t} = \pm\infty$;
\item [$(f_5)$] $f$ is increasing for $s \in \mathbb{R}$.
\end{itemize}

Let us consider $I_0, I: BV(\Omega) \to \mathbb{R}$ defined by
$$I_0(u) = \int_\Omega |Du| + \int_{\partial\Omega} |u|d\mathcal{H}_{N-1}$$
and
$$I(u) = \int_\Omega F(u) dx.$$
It follows that $\Phi: BV(\Omega) \to \mathbb{R}$ given by $\Phi(u) = I_0(u) - I(u)$ is the functional whose Euler-Lagrange equation is a relaxed form of (\ref{P1}). Since $\Phi$ is just a locally Lipschitz functional (since $I_0$ lacks smoothness), we have first to explain what we mean by saying that $u \in BV(\Omega)$ is a critical point of $\Phi$. Since $I_0$ is a convex locally Lipchitz functional and $I \in C^1(BV(\Omega))$, then $u \in BV(\Omega)$ is going to be called a {\it critical point of $\Phi$}, or a {\it bounded variation solution of (\ref{P1})}, if $I'(u) \in \partial I_0(u)$, where $\partial I_0(u)$ denotes the subdifferential of the convex function $I_0$. This is equivalent to 
\begin{equation}
I_0(v) - I_0(u) \geq I'(u)(v-u), \quad \forall v \in BV(\Omega).
\label{BVsolution}
\end{equation}

In the second application of our main results, we study a prescribed mean-curvature problem given by
\begin{equation}
\left\{
\begin{array}{rl}
\displaystyle -\mbox{div}\left(\frac{\nabla u}{\sqrt{1+|\nabla u|^2}}\right) & =  \lambda f(u) \quad \mbox{in
$\Omega$,}\\
u & = 0 \quad \mbox{on
$\partial\Omega$,}\\
\end{array} \right.
\label{Pcurvaturamediaintro}
\end{equation}
where $\lambda > 0$, $\Omega \subset \mathbb{R}^N$ is a smooth bounded domain, $N \geq 2$ and $f$ satisfies the same set of assumptions $(f_1)-(f_5)$. By working in $BV(\Omega)$ itself, we get ground state solutions of (\ref{Pcurvaturamediaintro}) by proving that the energy functional $\Phi$ satisfies all the assumptions of Theorems \ref{theorem1} and \ref{theorem3}, where $\Phi$ is given by
$$\Phi(u) = \tilde{I}_0(u) - \lambda I(u),$$
where $\tilde{I}_0, I: BV(\Omega) \to \mathbb{R}$ are given by
$$\tilde{I}_0(u) = \int_\Omega\sqrt{1 + |Du|^2} + \int_{\partial\Omega} |u|d\mathcal{H}_{N-1}$$
and
$$I(u) = \int_\Omega F(u) dx.$$

 In \cite{Omari}, Obersnel and Omari performed a deep study of (\ref{Pcurvaturamediaintro}) with a nonlinearity involving a parameter $\lambda > 0$, considering several different situations, like nonlinearities subquadratic at $0$, sublinear at $+\infty$, subquadratic at $0$ and sublinear at $+\infty$, among others. The authors have applied different methods in each situation, like a truncation in the operator by a function $a$, which allows them to work in $H^1_0(\Omega)$.

In \cite{Marzocchi}, Marzocchi has considered (\ref{Pcurvaturamediaintro}) which slightly stronger hypothesis and proved, by using an extension of the functional of $BV(\Omega)$ to some space $L^p(\Omega)$ and proving an abstract result of Ambrosetti-Rabinowitz type (see the celebrated paper \cite{Rabinowitz}), the existence of a sequence of solution whose energy diverges to $+\infty$. Note that any information about a ground state solution has been given in \cite{Marzocchi}, in such a way that our result can be seen in this sense as an improvement of both \cite{Omari} and \cite{Marzocchi}.

In \cite{Szulkin} the author study all the usual minimax theorems like Mountain Pass Theorem, Saddle Point Theorem, etc., for functionals written like $\Phi = I_0 - I$, where $I_0$ is locally Lipschitz, lower semicontinuous and $I$ is $C^1$. However, any mention of arguments which resembles a Nehari set has been made. Moreover, these results in general have among their assumptions, the Palais-Smale condition, which is known to be very hard to be proved for functionals defined in $BV(\Omega)$.

To finish, the wide range of situations in which Nehari method has been applied in problems involving operators like $-\Delta$ and $-\Delta_p$, together with the lack of study of different geometric situations when dealing with the 1-Laplacian and mean-curvature operators, lead us to think that this paper can give the tools to shed some light in several questions which could be raised to problems in $BV(\Omega)$, in analogy to situations involving $-\Delta$ and $-\Delta_p$ operators.

In Section 2 we prove the abstract results stated in this introduction. In Section 3, we start with a short subsection, in order to furnish the basic notation and results about functionals defined in $BV(\Omega)$. Later, we present two applications of our abstract results. To finish, in an Appendix we present an alternative proof of Theorem \ref{theorem3} which uses a version of the Lagrange Multiplier rule to locally Lipschitz functionals (see \cite{Clarke}), in the concrete case where the function $f$ is $C^1$, rather than just $C^0$.

\section{Proof of the abstract results}

\begin{proof}[Proof of Theorem \ref{theorem1}]
Let us start proving that $\mathcal{N} \neq \emptyset$. In order to do this let us prove that for all $w \in E \backslash \{0\}$, there exists a unique $t_w > 0$ such that $t_w w \in \mathcal{N}$.

For $w \in E \backslash \{0\}$ consider $\gamma_w : \mathbb{R}_+ \to \mathbb{R}$ defined by 
$$\gamma_w(t) = \Phi(t w).$$
Note that $\gamma_w$ is a smooth function such that, $\gamma_w(0) = \Phi(0)$ and $\gamma_w'(t) = I_0'(tw)w - I'(tw)w$. By $ii)$, for $t_0 = \rho/\|w\|$, it follows that
$\gamma_w(t_0) \geq \alpha_0 > \Phi(0)$. Moreover, by $v)$, it follows that
$$\lim_{t \to \infty} \gamma_w(t) = -\infty.$$
Then there exists a $t_w > 0$ such that $\gamma_w'(t_w) = 0$ and consequently, $t_w w \in \mathcal{N}$.

In order to verify the uniqueness of such a $t_w$, note that, supposing that $\gamma_w'(t) = \gamma_w'(s) = 0$, for $t,s > 0$, then it follows that
$$I_0'(tw)w - I'(tw)w = 0$$
and
$$I_0'(sw)w - I'(sw)w = 0.$$
Then
$$I_0'(tw)w - I_0'(sw)w = I'(sw)w - I'(tw)w$$
and by $iv)$ it follows that $t = s$.

Note that for all $u \in \mathcal{N}$, it follows by $ii)$
\begin{equation}
\Phi(u) = \max_{t \geq 0}\Phi(t u) \geq \Phi\left(\frac{\rho}{\|u\|}u\right) \geq \alpha_0 > \Phi(0).
\label{Eqnova}
\end{equation}
Then there exists $(u_n) \subset \mathcal{N}$ such that
$$
\lim_{n \to \infty}\Phi(u_n)  = \inf_{\mathcal{N}}\Phi =: c.
$$

Note that there exists $\delta > 0$ such that 
\begin{equation}
\|u\| \geq \delta \quad \mbox{for all $u \in \mathcal{N}$.}
\label{Neharibaixo}
\end{equation}
In fact, otherwise it would exists $(w_n) \subset \mathcal{N}$ such that $w_n \to 0$ in $E$, which would contradict (\ref{Eqnova}).

Let us prove that the minimizing sequence $(u_n)$ is bounded in $E$. Assume by contradiction that $\|u_n\| \to \infty$, as $n \to \infty$ and let $\displaystyle v_n = \frac{u_n}{\|u_n\|}$. Since $(v_n) \subset E$ is bounded, the compactness of the embedding $E \hookrightarrow F$, implies that there exists $v \in F$ such that $v_n \to v$ in $F$, up to a subsequence. Then it follows that $I(v_n) \to I(v)$.

If $v = 0$, then by $iii)$, for all $t \geq 0$
\begin{eqnarray*}
c + o_n(1) & = & \Phi(u_n) = \Phi(v_n\|u_n\|)\\
& = & \max_{s \geq 0}\Phi(s v_n)\\
& \geq & \Phi(t v_n)\\
& \geq & t - I(tv_n)\\
& = & t + o_n(1),
\end{eqnarray*}
which is a clear contradiction with the fact that $c \in \mathbb{R}$.

If $v \neq 0$, then by $v)$
$$
o_n(1) = \lim_{n \to \infty} \frac{c}{\|u_n\|} = \lim_{n \to \infty}\frac{\Phi(u_n)}{\|u_n\|} = \lim_{n \to \infty}\frac{\Phi(v_n\|u_n\|)}{\|u_n\|} = -\infty.
$$
Then we get a contradiction.

Hence $(u_n)$ is bounded in $E$ and then there exists $u \in F$ such that $u_n \to u$ in $F$, up to a subsequence. Then, by hypothesis, it follows that $u \in E$.

If $u = 0$, then by (\ref{Neharibaixo}), for all $t \geq 0$,
\begin{eqnarray*}
c + o_n(1) & = & \Phi(u_n)\\\
& = & \max_{s \geq 0}\Phi(s u_n)\\
& \geq & \Phi(t u_n)\\
& \geq & t\|u_n\| - I(tu_n)\\
& \geq & t \delta - I(tu_n)\\
& = & t\delta + o_n(1),
\end{eqnarray*}
which give us a contradiction.

Hence $u \neq 0$ and let us consider $t_u u \in \mathcal{N}$. Then, by $i)$
\begin{eqnarray*}
c & \leq & \Phi(t_u u)\\
& = & I_0(t_u u) - I(t_u u)\\
& \leq & \liminf_{n \to \infty} I_0(t_u u_n) - \lim_{n \to \infty}I(t_u u_n)\\
& = & \liminf_{n \to \infty} \Phi(t_u u_n)\\
& \leq & \liminf_{n \to \infty} \Phi(u_n) = c
\end{eqnarray*}
and then $\Phi(t_u u) = c$ and $t_u u \in \mathcal{N}$, which proves the theorem.
\end{proof}

Now let us present the proof of our second result.

\begin{proof}[Proof of Theorem \ref{theorem3}]
Suppose by contradiction that $0 \not \in \partial \Phi(u_0)$, then $\beta(u_0) > 0$, where $\beta(u_0) = \inf\{\|z\|_*; \, z \in \partial \Phi(u_0)\}$. Since $u \mapsto \beta(u)$ is lower semicontinuous (see \cite{Chang} for instance), it follows that there exists $\kappa > 0$ such that
\begin{equation}
\beta(u) > \frac{\beta(u_0)}{2} > 0, \quad \forall u \in B_\kappa(u_0).
\label{betaestimate}
\end{equation}
Let us denote $\displaystyle J = \left[1 - \frac{\kappa}{4},1 + \frac{\kappa}{4}\right] \subset \mathbb{R}$ and define $g: J \to E$ by
$$g(t) = tu_0$$
Denoting $\displaystyle c := \Phi(u_0) = \inf_{\mathcal{N}}\Phi$, note that
$$
\Phi(g(t)) < c, \quad \forall t \neq 1.
$$
Moreover, note that
$$
\max\left\{\Phi\left(g\left(1 - \frac{\kappa}{4}\right)\right),\Phi\left(g\left(1 + \frac{\kappa}{4}\right)\right)\right\} = c_0 < c.
$$

By using the version of the Deformation Lemma to locally Lipschitz functionals, without the Palais-Smale condition (see \cite{Dai}), there exists $\epsilon > 0$ such that
$$
\epsilon < \min\left\{\frac{c-c_0}{2},\frac{\kappa \beta(u_0)}{16}\right\},
$$
and an homeomorphism $\eta: E \to E$ such that
\begin{itemize}
\item [$i)$] $\eta(x) = x$ for all $x \not \in  \Phi^{-1}([c-\epsilon_0,c+\epsilon_0]) \cap B_\kappa(u_0)$;
\item [$ii)$] $\eta(\Phi_{c+\epsilon}\cap B_{\kappa/2}(u_0)) \subset \Phi_{c-\epsilon}$;
\item [$iii)$] $\Phi(\eta(x)) \leq \Phi(x)$, for all $x \in E$.
\end{itemize}

Let us define now $h: J \to E$ by $h(t) = \eta(g(t))$ and two functions, $\Psi_0, \Psi_1 : J \to \mathbb{R}$ by
$$\Psi_0(t) = \Phi'(tu_0)u_0$$
and
$$\Psi_1(t) = \frac{1}{t}\Phi'(h(t))h(t).$$

Since for $\displaystyle t \in \left\{\left(1 - \frac{\kappa}{4}\right),\left(1 + \frac{\kappa}{4}\right)\right\}$, $\Phi(g(t)) \leq c_0 < c-\epsilon_0$, then $h(t) = \eta(g(t)) = g(t) = tu_0$ for $t \in \displaystyle \left\{\left(1 - \frac{\kappa}{4}\right),\left(1 + \frac{\kappa}{4}\right)\right\}$. Hence
\begin{equation}
\Psi_0(t) = \Psi_1(t), \quad \forall t \in \left\{\left(1 - \frac{\kappa}{4}\right),\left(1 + \frac{\kappa}{4}\right)\right\}.
\label{fronteira}
\end{equation}

By degree theory, $d(\Psi_0,J,0) = 1$ and, taking into account (\ref{fronteira}), we have that $d(\Psi_1,J,0) = 1$. Then we have that there exists $t \in J$ such that $h(t) \in \mathcal{N}$.
This implies that 
$$c \leq \Phi(h(t)) = \Phi(\eta(g(t))).$$
But note that $\Phi(g(t)) < c + \epsilon$ and also $g(J) \subset B_{\kappa/2}(u_0)$. Then, by $ii)$
$$ \Phi(\eta(g(t))) < c - \epsilon$$
which contradicts the last inequality. Then the result follows.
\end{proof}

\section{Applications}
\label{applications}
\subsection{Preliminaries}
\label{Preliminares}
First of all let us introduce the space of functions of bounded variation, $BV(\Omega)$. We say that $u \in BV(\Omega)$, or is a function of bounded variation, if $u \in L^1(\Omega)$, and its distributional derivative $Du$ is a vectorial Radon measure, i.e., 
$$BV(\Omega) = \left\{u \in L^1(\Omega); \, Du \in \mathcal{M}(\Omega,\mathbb{R}^N)\right\}.$$
It can be proved that $u$ belongs to $BV(\Omega)$ is equivalent to
$$\int_\Omega |Du| := \sup\left\{\int_\Omega u \mbox{div}\phi dx; \, \, \phi \in C^1_c(\Omega,\mathbb{R}^N), \, \mbox{s.t.} \, \, \|\phi\|_\infty \leq 1\right\} < +\infty.$$

The space $BV(\Omega)$ is a Banach space when endowed with the norm
$$\|u\|_{BV(\Omega)} := \int_\Omega |Du| + \|u\|_1,$$
which is continuously embedded into $L^r(\Omega)$ for all $r \in [1,N/(N-1)]$ and is compactly embedded for $r \in [1,N/(N-1))$ (see \cite{Buttazzo}[Theorems 10.1.3, 10.1.4]).

Because of Trace Theorem \cite{Buttazzo}[Theorem 10.2.1] and the continuous embedding of $BV(\Omega)$ into $L^1(\Omega)$, it follows that 
$$\|u\| := \int_\Omega |Du| + \int_{\partial \Omega}|u|d\mathcal{H}_{N-1},$$
where $\mathcal{H}_{N-1}$ denotes the usual $(N-1)-$dimensional Hausdorff measure, defines in $BV(\Omega)$ an equivalent norm, which we use as the standard norm in $BV(\Omega)$ from now on.

It can be proved that $I_0, \tilde{I}_0:BV(\Omega) \to \mathbb{R}$ given by
\begin{equation}
I_0(u) = \int_\Omega |Du| + \int_{\partial\Omega} |u|d\mathcal{H}_{N-1}
\label{I_0}
\end{equation}
and
\begin{equation}
\tilde{I}_0(u) = \int_\Omega \sqrt{1 + |Du|^2} + \int_{\partial\Omega} |u|d\mathcal{H}_{N-1}
\label{I_0tilde}
\end{equation}
are convex functionals and Lipschitz continuous in $BV(\Omega)$, where 
$$\int_\Omega \sqrt{1 + |Du|^2}:= sup\left\{\int_\Omega \left(g_{N+1} +  \sum_{i=1}^{N}u\frac{\partial g_i}{\partial x_i} \right)dx; \, \, g_i \in C^1_c(\Omega), \,  \|(g_1,...,g_{N+1})\|_\infty \leq 1\right\}.$$

Although it follows from a classical argument, for the sake of completeness, let us prove in the following result that $\tilde{I}_0$ is lower semicontinuous with respect to the $L^r(\Omega)$ topology, for $r \in [1,N/(N-1)]$.

\begin{lemma}
$\tilde{I}_0$ is lower semicontinuous with respect to the $L^r(\Omega)$ topology, for $r \in [1,N/(N-1)]$, i.e., if $(u_n) \subset BV(\Omega)$, $u \in BV(\Omega)$ are such that $u_n \to u$ in $L^r(\Omega)$, for $r \in [1,N/(N-1)]$, then
$$\tilde{I}_0(u) \leq \liminf_{n \to \infty}\tilde{I}_0(u_n).$$
\label{I_0tildesemicontinuo}
\end{lemma}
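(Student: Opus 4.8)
The plan is to prove lower semicontinuity of $\tilde{I}_0$ directly from its definition as a supremum of continuous linear functionals. The key observation is that
$$\tilde{I}_0(u) = \sup\left\{\Lambda_g(u); \, g = (g_1,\dots,g_{N+1}) \in C^1_c(\Omega)^{N+1}, \, \|g\|_\infty \leq 1\right\} + \int_{\partial\Omega}|u|d\mathcal{H}_{N-1},$$
where for each fixed admissible test field $g$ I set
$$\Lambda_g(u) := \int_\Omega \left(g_{N+1} + \sum_{i=1}^{N} u\,\frac{\partial g_i}{\partial x_i}\right)dx.$$
The heart of the matter is that $\Lambda_g$ is continuous with respect to $L^r(\Omega)$ convergence for every $r \in [1,N/(N-1)]$: indeed $u \mapsto \Lambda_g(u)$ is affine in $u$ with $L^\infty$ (hence $L^{r'}$ for any $r'$) coefficients $\partial g_i/\partial x_i$ having compact support, so if $u_n \to u$ in $L^1(\Omega)$ (which follows from convergence in any $L^r$ on the bounded domain $\Omega$), then $\Lambda_g(u_n) \to \Lambda_g(u)$ by a straightforward application of H\"older's inequality to the difference $\int_\Omega (u_n - u)\,\partial g_i/\partial x_i\,dx$.

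Granting this, the proof reduces to a standard fact: a pointwise supremum of a family of lower semicontinuous (here even continuous) functions is lower semicontinuous. Concretely, suppose $u_n \to u$ in $L^r(\Omega)$. I would fix an arbitrary admissible $g$ with $\|g\|_\infty \leq 1$ and write, using $\Lambda_g(u_n) \leq \tilde{I}_0(u_n)$ (after noting the boundary term is nonnegative and can be absorbed appropriately),
$$\Lambda_g(u) = \lim_{n \to \infty}\Lambda_g(u_n) \leq \liminf_{n \to \infty}\tilde{I}_0(u_n).$$
Taking the supremum over all such $g$ on the left-hand side then yields $\int_\Omega \sqrt{1+|Du|^2} \leq \liminf_{n\to\infty}\tilde{I}_0(u_n)$; the remaining boundary term requires a separate argument.

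The main obstacle will be the boundary integral $\int_{\partial\Omega}|u|\,d\mathcal{H}_{N-1}$, since the trace operator is only continuous with respect to the strict (intermediate) topology of $BV(\Omega)$ and is \emph{not} continuous under mere $L^r$ convergence. To handle this cleanly, the standard device is to work on a slightly larger smooth domain $\Omega' \supset\supset \Omega$, extend each $u_n$ and $u$ by zero outside $\Omega$, and absorb the boundary term into the interior total variation computed on $\Omega'$. More precisely, for $u \in BV(\Omega)$ extended by zero one has $\int_{\Omega'}\sqrt{1+|D\tilde u|^2} = \int_\Omega\sqrt{1+|Du|^2} + \int_{\partial\Omega}|u|\,d\mathcal{H}_{N-1} + \mathcal{L}^N(\Omega'\setminus\Omega)$ (the jump across $\partial\Omega$ contributes exactly the trace term), so $\tilde{I}_0(u)$ equals this reformulation up to the fixed constant $\mathcal{L}^N(\Omega'\setminus\Omega)$. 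Since extension by zero is $L^r$-continuous and $\tilde u_n \to \tilde u$ in $L^r(\Omega')$, I can then apply the interior lower-semicontinuity argument above on $\Omega'$ to the extended functions, where there is no longer a boundary term to worry about, and conclude. I expect the verification that extension by zero converts the trace term into the jump part of the variation on $\Omega'$ to be the technically delicate point, but it is a well-known fact about $BV$ functions that can be cited or checked via the test-field definition.
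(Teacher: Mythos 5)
Your proposal is correct and follows essentially the same route as the paper: the authors also extend by zero to a larger domain (a ball $B \supset \overline{\Omega}$), use the identity $\int_B\sqrt{1+|D\overline{u}|^2} = \int_\Omega\sqrt{1+|Du|^2} + \int_{\partial\Omega}|u|\,d\mathcal{H}_{N-1} + |B\setminus\Omega|$ to absorb the trace term into the jump across $\partial\Omega$, and then conclude by the supremum-of-$L^r$-continuous-functionals argument on $B$, where no boundary term remains. The only difference is cosmetic: your preliminary interior-only discussion is subsumed by the extension step, which the paper applies directly.
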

\begin{proof}
In fact, the difficulty here is the presence of the integral on $\partial \Omega$ in $\tilde{I}_0$, since in the absence of this term, this is classical result which can be found in \cite{Giusti}[Theorem 14.2] for example.

Let $B \subset \mathbb{R}^N$ be a ball such that $\overline{\Omega} \subset B$. Given $u \in BV(\Omega)$, define
$$\overline{u}(x)= \left\{
\begin{array}{rl}
u(x), & x \in \Omega\\
0, & x \in B\backslash \overline{\Omega},
\end{array} \right.
$$
and note that $\overline{u} \in BV(B)$ and $D\overline{u} = Du|_\Omega + u \nu \mathcal{H}_{N-1}|_{\partial \Omega}$ (see \cite{Buttazzo}[Example 10.2.2]), where $\nu$ denotes the inner normal vector to $\partial \Omega$.

Then it follows that
\begin{equation}
\int_B\sqrt{1 + |D\overline{u}|^2} = \int_\Omega\sqrt{1 + |Du|^2} + \int_{\partial \Omega}|u|d\mathcal{H}_{N-1} + |B\backslash \Omega|.
\label{lowersemicontinuous}
\end{equation}

Now let $(u_n)$ and $u$ be like in the statement. Then the corresponding extensions $(\overline{u}_n) \subset BV(B)$, $\overline{u} \in BV(B)$ and $\overline{u}_n \to \overline{u}$ in $L^r(B)$. Moreover, for each $g \in C^1_c(B, \mathbb{R}^{N+ 1})$, we have that
\begin{eqnarray*}
\int_B \left(g_{N+1} +  \sum_{i=1}^{N}\overline{u}\frac{\partial g_i}{\partial x_i} \right)dx & = & \lim_{n \to \infty} \int_B \left(g_{N+1} +  \sum_{i=1}^{N}\overline{u}_n\frac{\partial g_i}{\partial x_i}\right)dx \\
& \leq & \liminf_{n \to \infty} \int_B\sqrt{1 + |D\overline{u}_n|^2}.
\end{eqnarray*}
Then
$$
\int_B\sqrt{1 + |D\overline{u}|^2} - |B\backslash \overline{\Omega}| \leq \liminf_{n \to \infty} \int_B\sqrt{1 + |D\overline{u}_n|^2} - |B\backslash \overline{\Omega}|
$$
and by (\ref{lowersemicontinuous}) it follows that
$$
\tilde{I}_0(u) \leq \liminf_{n \to \infty} \tilde{I}_0(u_n).
$$
\end{proof}
\begin{remark}
Using the same arguments it is possible to prove the same statement for $I_0$.
\label{lowersemicontinuousI_0}
\end{remark}

It follows also that $BV(\Omega)$ is a {\it lattice}, i.e., if $u,v \in BV(\Omega)$, then $\max\{u,v\}, \min\{u,v\} \in BV(\Omega)$ and also
\begin{equation}
I_0(\max\{u,v\}) + I_0(\min\{u,v\}) \leq I_0(u) + I_0(v), \quad \forall u,v \in BV(\Omega)
\label{lattice}
\end{equation}
and
\begin{equation}
\tilde{I}_0(\max\{u,v\}) + \tilde{I}_0(\min\{u,v\}) \leq \tilde{I}_0(u) + \tilde{I}_0(v), \quad \forall u,v \in BV(\Omega).
\label{lattice}
\end{equation}

For a vectorial Radon measure $\mu \in \mathcal{M}(\Omega,\mathbb{R}^N)$, we denote by $\mu = \mu^a + \mu^s$ the usual decomposition stated in the Radon Nikodyn Theorem, where $\mu^a$ and $\mu^s$ are, respectively, the absolute continuous and the singular parts with respect to the $N-$dimensional Lebesgue measure $\mathcal{L}^N$. We denote by $|\mu|$, the absolute value of $\mu$, the scalar Radon measure defined like in \cite{Buttazzo}[pg. 125]. By $\displaystyle \frac{\mu}{|\mu|}(x)$ we denote the usual Lebesgue derivative of $\mu$ with respect to $|\mu|$, given by
$$\frac{\mu}{|\mu|}(x) = \lim_{r \to 0}\frac{\mu(B_r(x))}{|\mu|(B_r(x))}.$$ 
 
In order to remark the proprieties of differentiability (or the lack of it) that $I_0$ and $\tilde{I}_0$ have, let us recall the result of Anzellotti in \cite{Anzellotti}. For $g: \Omega \times \mathbb{R}^N \to \mathbb{R}$, let us define
$$g^0(x,p) = \lim_{t \to 0^+}g\left(x,\frac{p}{t}\right)t.$$
Suppose that $g$ is differentiable in $p$ for all $x \in \Omega$, $p \in \mathbb{R}^N$ and $g^0(x,p)$ is differentiable for all $x \in \Omega$, $p \in \mathbb{R}^N\backslash \{0\}$ and also that there exists $M > 0$ such that
$$|g_p(x,p)| \leq M, \quad |g^0_p(x,p)| \leq M.$$
Then $\mathcal{J}_g: BV(\Omega) \to \mathbb{R}$ defined by 
$$\mathcal{J}_g(u) = \int_\Omega g(x,Du) := \int_\Omega g(x,(Du)^a(x))dx + \int_\Omega g^0\left(x,\frac{Du}{|Du|}(x)\right)|Du|^s$$ 
is differentiable at the point $u \in BV(\Omega)$ in the direction $v \in BV(\Omega)$ if and only if $|Dv|^s$ is absolutely continuous with respect to $|Du|^s$, and in such a case one has
$$\mathcal{J}_g'(u)v = \int_\Omega g_p(x,(Du)^a(x))(Dv)^a(x)dx + \int_\Omega g^0_p\left(x,\frac{Du}{|Du|}(x)\right)\frac{Dv}{|Dv|}(x)|Dv|^s.$$ 
 
Since 
$$I_0(u) = \mathcal{J}_g(u) + \int_{\partial \Omega}|u|d\mathcal{H}_{N-1}$$
with $g(x,p) = g^0(x,p) = |p|$ and 
$$\tilde{I}_0(u) = \mathcal{J}_g(u) + \int_{\partial \Omega}|u|d\mathcal{H}_{N-1}$$
with $g(x,p) = \sqrt{1 + |p|^2}$ and $g^0(x,p) = |p|$, we have that, given $u \in BV(\Omega)$,
$I_0'(u)v$ is well defined for every $v \in BV(\Omega)$ such that $|Dv|^s$ is absolutely continuous with respect to $|Du|^s$ and $v(x) = 0$, $\mathcal{H}_ {N-1}-$ a.e. on the set $\{x \in \partial\Omega; \, u(x) = 0\}$ and we have that
\begin{equation}
I_0'(u)v = \int_\Omega \frac{(Du)^a(Dv)^a}{|(Du)^a|}dx + \int_\Omega \frac{Du}{|Du|}(x)\frac{Dv}{|Dv|}(x)|(Dv)|^s + \int_{\partial \Omega}\mbox{sgn}(u) v d\mathcal{H}_{N-1}.
\label{derivadaI_0}
\end{equation}

Analogously, we have that, given $u \in BV(\Omega)$,
$\tilde{I}_0'(u)v$ is well defined for every $v \in BV(\Omega)$ such that $|Dv|^s$ is absolutely continuous with respect to $|Du|^s$ and $v(x) = 0$, $\mathcal{H}_ {N-1}-$ a.e. on the set $\{x \in \partial\Omega; \, u(x) = 0\}$ and we have that
\begin{equation}
\tilde{I}_0'(u)v = \int_\Omega \frac{(Du)^a(Dv)^a}{\sqrt{1 + |(Du)^a|^2}}dx + \int_\Omega \frac{Du}{|Du|}(x)\frac{Dv}{|Dv|}(x)|(Dv)|^s + \int_{\partial \Omega}\mbox{sgn}(u) v d\mathcal{H}_{N-1}.
\label{derivadatildeI_0}
\end{equation}

Now let us just make precise the sense of solutions that we consider in this work. Regarding (\ref{Pcurvaturamediaintro}), the energy functional associated to it is $\Phi: BV(\Omega) \to \mathbb{R}$ given by
$$\Phi(u) = \tilde{I}_0(u) - \lambda I(u),$$
where 
\begin{equation}
I(u) = \int_\Omega F(u)dx.
\label{F}
\end{equation}

Since $I \in C^1(BV(\Omega))$ and $\tilde{I}_0$ is Lipschitz continuous, we say that $u_0 \in BV(\Omega)$ is a solution of (\ref{Pcurvaturamediaintro}) if $0 \in \partial \Phi(u_0)$, where $\partial \Phi(u_0)$ denotes the generalized gradient of $\Phi$ in $u_0$, as defined in \cite{Chang}. It follows that this is equivalent to $\lambda I'(u_0) \in \partial \tilde{I}_0(u_0)$ and, since $\tilde{I}_0$ is convex, this is written as
\begin{equation}
\tilde{I}_0(v) - \tilde{I}_0(u_0) \geq \lambda I'(u_0)(v-u_0), \quad \forall v \in BV(\Omega).
\label{BVsolutionMC}
\end{equation}
Hence all $u_0 \in BV(\Omega)$ such that (\ref{BVsolutionMC}) holds is going to be called a bounded variation solution of (\ref{Pcurvaturamediaintro}). In this work we are not going to obtain weak solutions in $W^{1,1}_0(\Omega)$ of  (\ref{Pcurvaturamediaintro}), since no regularity result is going to be proved here, but if we could do so, it would be possible to prove that all bounded variation solutions which belong to $W^{1,1}_0(\Omega)$, in fact satisfies the weak form of (\ref{Pcurvaturamediaintro}), namely
$$\int_\Omega \frac{\nabla u_0 \nabla v}{\sqrt{1 + |\nabla u_0|^2}}dx = \lambda \int_\Omega f(u_0)v dx, \quad \forall v \in W^{1,1}_0(\Omega).$$

Analogously, we say that $u_0 \in BV(\Omega)$ is a bounded variation solution of (\ref{P1}), if
$$
I_0(v) - I_0(u_0) \geq I'(u_0)(v-u_0), \quad \forall v \in BV(\Omega).
$$

\subsection{A problem involving $\Delta_1$}
\label{Section1Laplaciano}

Let us consider the problem of finding critical points of the functional $\Phi: BV(\Omega) \to \mathbb{R}$ given by
$$\Phi(u) = \int_{\Omega} |Du| + \int_{\partial\Omega} |u|d\mathcal{H}_{N-1} - \int_{\Omega} F(u) dx,$$
where $\Omega \subset \mathbb{R}^N$ is a smooth bounded domain, $N \geq 2$, $\displaystyle F(t) := \int_0^t f(s)ds$, and $f$ satisfy the following assumptions
\begin{itemize}
\item [$(f_1)$] $f \in C^0(\mathbb{R})$;
\item [$(f_2)$] $\displaystyle f(s) = o(1)$, as $|s| \to 0$;
\item [$(f_3)$] there exist constants $c_1, c_2 > 0$ and $p \in (1,1^*)$ such that
$$|f(s)| \leq c_1 + c_2|s|^{p-1}, \quad s \in \mathbb{R};$$
\item [$(f_4)$] $\displaystyle \lim_{t \to \pm\infty}\frac{F(t)}{t} = \pm\infty$;
\item [$(f_5)$] $f$ is increasing for $s \in \mathbb{R}$.
\end{itemize}

In fact the critical points of $\Phi$ are weak solutions of a relaxed form of
\begin{equation}
\left\{
\begin{array}{rl}
\displaystyle - \Delta_1 u & = f(u) \quad \mbox{in
$\Omega$,}\\
u & = 0 \quad \mbox{on
$\partial\Omega$,}\\
\end{array} \right.
\label{P1laplaciano}
\end{equation}
where $\displaystyle \Delta_1 u = \mbox{div}\left(\frac{\nabla u}{|\nabla u|}\right)$.

Let us consider $I_0, I: BV(\Omega) \to \mathbb{R}$ functionals given by (\ref{I_0}) and (\ref{F}), respectively and define $\Phi: BV(\Omega) \to \mathbb{R}$ by
$$\Phi(u) = I_0(u) - I(u).$$

Clearly the operator $\Delta_1$ is highly singular and some words about this imprecise way to define it have to be stated. The first step is to extend the functionals $I_0$, $I$ and $\Phi$ to $L^{1^*}(\Omega)$, defining $\overline{I_0}, \overline{I}, \overline{\Phi}: L^{1^*}(\Omega) \to \mathbb{R}$, where
$$
\overline{I_0}(u) = 
\left\{
\begin{array}{ll}
I_0(u), & \mbox{if $u \in BV(\Omega)$},\\
+\infty, & \mbox{if $u \in L^{1^*}(\Omega)\backslash BV(\Omega)$},
\end{array}
\right.
$$
$$
\overline{I}(u) = \int_{\Omega}F(u)dx
$$
and $\overline{\Phi} = \overline{I_0} - \overline{I}$. It is easy to see that $\overline{I}$ belongs to $C^1(L^{1^*}(\Omega))$ and that $\overline{I_0}$ is convex and lower semicontinuous in $L^{1^*}(\Omega)$. Hence the subdifferential of $\overline{I_0}$ is well defined. The following is a crucial result in obtaining an Euler-Lagrange equation satisfied by the critical points of $\Phi$.

\begin{lemma}
If  $u \in BV(\Omega)$ is such that $0 \in \partial \Phi(u)$, then $0 \in \partial \overline{\Phi}(u)$.
\end{lemma}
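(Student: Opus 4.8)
The plan is to translate both the hypothesis and the conclusion into subdifferential inequalities, and then to observe that the inequality on $L^{1^*}(\Omega)$ is inherited for free from the one on $BV(\Omega)$ once the convention $\overline{I_0}=+\infty$ off $BV(\Omega)$ is taken into account. The proof is therefore short; the only care needed is bookkeeping about which space each dual pairing lives in.

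First I would unwind the hypothesis $0\in\partial\Phi(u)$. Since $I_0$ is convex and Lipschitz continuous on $BV(\Omega)$ and $I\in C^1(BV(\Omega))$ is strictly differentiable, the sum rule for Clarke's generalized gradient gives $\partial\Phi(u)=\partial I_0(u)-I'(u)$, and for the convex functional $I_0$ the generalized gradient coincides with the convex subdifferential. Hence $0\in\partial\Phi(u)$ is equivalent to $I'(u)\in\partial I_0(u)$, that is, to
\begin{equation}
I_0(v)-I_0(u)\geq I'(u)(v-u),\quad \forall\, v\in BV(\Omega),
\label{sketchstar}
\end{equation}
where $I'(u)(w)=\int_\Omega f(u)w\,dx$. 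Symmetrically, since $\overline{I}\in C^1(L^{1^*}(\Omega))$ and $\overline{I_0}$ is convex and lower semicontinuous on $L^{1^*}(\Omega)$, the statement $0\in\partial\overline{\Phi}(u)$ is equivalent to $\overline{I}'(u)\in\partial\overline{I_0}(u)$ in the sense of convex analysis, i.e. to
\begin{equation}
\overline{I_0}(v)-\overline{I_0}(u)\geq \overline{I}'(u)(v-u),\quad \forall\, v\in L^{1^*}(\Omega),
\label{sketchstarstar}
\end{equation}
with $\overline{I}'(u)(w)=\int_\Omega f(u)w\,dx$.

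The key bookkeeping point I would then establish is that $\overline{I}'(u)$ restricts to $I'(u)$ on $BV(\Omega)$. Using $(f_3)$ together with the subcriticality $p<1^*=N/(N-1)$, one checks $f(u)\in L^N(\Omega)=\big(L^{1^*}(\Omega)\big)^*$, so $\overline{I}'(u)$ is a well-defined continuous functional on $L^{1^*}(\Omega)$, represented by the same kernel $f(u)$; consequently the two dual pairings agree on $BV(\Omega)\subset L^{1^*}(\Omega)$, and $\overline{I}'(u)(v-u)$ is finite for every $v\in L^{1^*}(\Omega)$ by H\"older's inequality.

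Finally I would verify (\ref{sketchstarstar}) by splitting on whether $v\in BV(\Omega)$. If $v\in L^{1^*}(\Omega)\setminus BV(\Omega)$, then $\overline{I_0}(v)=+\infty$ while the right-hand side is finite, so (\ref{sketchstarstar}) holds trivially. If $v\in BV(\Omega)$, then $\overline{I_0}(v)=I_0(v)$, $\overline{I_0}(u)=I_0(u)$, and $\overline{I}'(u)(v-u)=I'(u)(v-u)$, so (\ref{sketchstarstar}) collapses exactly to (\ref{sketchstar}), which holds by hypothesis. This yields (\ref{sketchstarstar}), hence $0\in\partial\overline{\Phi}(u)$. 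I expect the only delicate steps to be the convexity/sum-rule identifications producing the equivalences between $0\in\partial\Phi(u)$, $0\in\partial\overline{\Phi}(u)$ and the inequalities (\ref{sketchstar})--(\ref{sketchstarstar}), together with the integrability check $f(u)\in\big(L^{1^*}(\Omega)\big)^*$; the core inequality itself comes for free from the value $+\infty$ assigned to $\overline{I_0}$ outside $BV(\Omega)$.
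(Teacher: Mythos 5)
Your proposal is correct and follows essentially the same route as the paper: reduce $0\in\partial\Phi(u)$ to the convex subdifferential inequality for $I_0$, then verify the corresponding inequality for $\overline{I_0}$ on $L^{1^*}(\Omega)$ by splitting into the cases $v\in BV(\Omega)$ (where it collapses to the hypothesis) and $v\notin BV(\Omega)$ (where $\overline{I_0}(v)=+\infty$ makes it trivial). The extra details you supply — the sum-rule identification of $\partial\Phi(u)$ and the integrability check $f(u)\in L^N(\Omega)$ — are left implicit in the paper but are consistent with it.
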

\begin{proof}
Suppose that $u \in BV(\Omega)$ is such that $0 \in \partial \Phi(u)$. Then $u$ satisfies (\ref{BVsolution}).
Let us verify that
$$
\overline{I_0}(v) - \overline{I_0}(u) \geq \overline{I}'(u)(v-u), \quad \forall v \in L^{1^*}(\Omega).
$$
For $v \in L^{1^*}(\Omega)$, note that:
\begin{itemize}
\item if $v \in BV(\Omega) \cap L^{1^*}(\Omega)$, then
\begin{eqnarray*}
\overline{I_0}(v) - \overline{I_0}(u) & = & I_0(v) - I_0(u)\\
& \geq & I'(u)(v-u)\\
& = & \int_{\Omega}f(u)(v-u)dx\\
& = & \overline{I}'(u)(v-u);
\end{eqnarray*}

\item if $u \in L^{1^*}(\Omega)\backslash BV(\Omega)$, since $\overline{I_0}(v) = +\infty$ and $\overline{I_0}(u) < +\infty$, it follows that
\begin{eqnarray*}
\overline{I_0}(v) - \overline{I_0}(u) & = & +\infty\\
& \geq & \overline{I}'(u)(v-u).
\end{eqnarray*}
\end{itemize}
Therefore the result follows.
\end{proof}

Let us assume that $u \in BV(\Omega)$ is a bounded variation solution of (\ref{P1laplaciano}). Since $0 \in \partial \Phi(u)$, by the last result it follows that $0 \in \partial \overline{\Phi}(u)$. Since $\overline{I_0}$ is convex and $\overline{I}$ is smooth, it follows that $\overline{I}'(u) \in \partial \overline{I_0}(u)$. By the very definition of the subdifferential, there exist $z^* \in L^N(\Omega)$ such that $z^* \in \partial \overline{I_0}(u)$ and
$$
\overline{I}'(u) = z^* \quad \mbox{ in $L^N(\Omega)$.}
$$
By \cite{Kawohl}[Proposition 4.23, pg. 529], it follows that there exist $z \in L^\infty(\Omega, \mathbb{R}^N)$ such that $\|z\|_\infty \leq 1$,
\begin{equation}
-\mbox{div}{z} = z^* \quad \mbox{ in $L^N(\Omega)$}
\label{eulerlagrange1}
\end{equation}
and 
\begin{equation}
 -\int_{\Omega}u \mbox{div}z dx = \int_{\Omega}|Du|,
 \label{eulerlagrange2}
 \end{equation}
 where the divergence in (\ref{eulerlagrange1}) has to be understood in the distributional sense. Therefore, it follows from (\ref{eulerlagrange1}) and (\ref{eulerlagrange2}) that $u$ satisfies
\begin{equation}
\left\{
\begin{array}{l}
\exists z \in L^\infty(\Omega,\mathbb{R}^N), \, \, \|z\|_\infty \leq 1,\, \,  \mbox{div}z \in L^N(\Omega), \\ \\ 
-\int_{\Omega}u \mbox{div}z dx = \int_{\Omega}|Du|,\\ \\
-\mbox{div} z = f(u), \quad \mbox{a.e. in $\Omega$}.
\end{array}
\right.
\label{eulerlagrangeequation}
\end{equation}
Hence, (\ref{eulerlagrangeequation}) is the precise version of (\ref{P1laplaciano}).

Note that $I_0$ is Lipschitz continuous in $BV(\Omega)$ and $I \in C^1(BV(\Omega))$. Moreover, $BV(\Omega)$ is compactly embedded into $L^p(\Omega)$, $p$ as in $(f_3)$, $I(0) = 0$ and, for all $u \in BV(\Omega)$ and $s \in \mathbb{R}$, by (\ref{derivadaI_0}) we have that
$$
\begin{array}{lll}
\displaystyle I_0'(su)u & = &\displaystyle \lim_{t \to 0} \frac{I_0(su + tu) - I_0(su)}{t}\\
& = & \displaystyle\int_\Omega \frac{(D(su))^a(Du)^a}{|(D(su))^a|}dx + \int_\Omega \frac{D(su)}{|D(su)|}(x)\frac{Du}{|Du|}(x)|Du|^s + \int_{\partial \Omega}\mbox{sgn}(su) u d\mathcal{H}_{N-1}\\
& = & \displaystyle \int_\Omega |(Du)^a|dx + \int_\Omega|(Du)^s| + \int_{\partial \Omega}|u|d\mathcal{H}_{N-1}
\end{array}
$$
and then $I_0'(u)u = I_0(u)$, for all $u \in BV(\Omega)$.

Let us define the Nehari set by
\begin{eqnarray*}
\mathcal{N} & = & \left\{u \in BV(\Omega)\backslash\{0\}; \, I_0'(u)u = I'(u)u \right\}\\
& = &  \left\{u \in BV(\Omega)\backslash\{0\}; \, \int_{\Omega} |Du| + \int_{\partial\Omega} |u|d\mathcal{H}_{N-1} = \int_\Omega f(u)udx \right\}.
\end{eqnarray*}

In the following result we prove that all nontrivial critical points of $\Phi$ belong to $\mathcal{N}$.

\begin{lemma}
If $u_0 \in BV(\Omega)$, $u_0 \neq 0$ and $0 \in \partial \Phi(u_0)$, then $u \in \mathcal{N}$.
\label{criticoNehari}
\end{lemma}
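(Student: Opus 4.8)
The plan is to turn the inclusion $0 \in \partial \Phi(u_0)$ into a single convex subdifferential inequality and then extract the Nehari identity by testing that inequality against two well-chosen competitors. Since $I_0$ is convex and Lipschitz continuous while $I \in C^1(BV(\Omega))$, the condition $0 \in \partial \Phi(u_0)$ is equivalent to $I'(u_0) \in \partial I_0(u_0)$, i.e.\ to the inequality (\ref{BVsolution}):
$$I_0(v) - I_0(u_0) \geq I'(u_0)(v - u_0), \quad \forall v \in BV(\Omega).$$
This is exactly the reformulation already recorded for bounded variation solutions of (\ref{P1laplaciano}), so no additional justification is needed to pass to it.

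Next I would substitute two admissible test functions. Taking $v = 0$ and recalling $I_0(0) = 0$, the inequality reduces to $I_0(u_0) \leq I'(u_0)u_0$. Taking $v = 2u_0$ and using the positive $1$-homogeneity of $I_0$, which is immediate from the definition (\ref{I_0}) of $I_0$ as the total variation plus the boundary trace term and gives $I_0(2u_0) = 2 I_0(u_0)$, the inequality becomes $2 I_0(u_0) - I_0(u_0) \geq I'(u_0)u_0$, that is $I_0(u_0) \geq I'(u_0)u_0$. The two estimates pinch to the equality $I_0(u_0) = I'(u_0)u_0$.

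Finally I would invoke the identity $I_0'(u_0)u_0 = I_0(u_0)$ established just above the statement (it follows from the directional derivative formula (\ref{derivadaI_0}) evaluated at $v = u_0$). Combining it with the equality just obtained yields $I_0'(u_0)u_0 = I'(u_0)u_0$, and since $u_0 \neq 0$ by hypothesis, this places $u_0$ in $\mathcal{N}$, completing the argument.

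I do not expect a genuine obstacle here: the proof is a direct two-sided pinching, and the only structural ingredient being exploited is the $1$-homogeneity of $I_0$ (special to the $\Delta_1$ functional and not shared by the mean-curvature functional $\tilde{I}_0$). The single point to keep in mind is that both $v = 0$ and $v = 2u_0$ are legitimate elements of $BV(\Omega)$, so that (\ref{BVsolution}) may indeed be applied to them.
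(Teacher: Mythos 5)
Your proof is correct, and the mechanism is the same as the paper's: both arguments reduce $0 \in \partial \Phi(u_0)$ to the convex subdifferential inequality (\ref{BVsolution}) and then pinch along the ray $\{t u_0 : t > 0\}$. The difference is in how the pinching is executed. The paper tests with $v = u_0 + t u_0$ and lets $t \to 0^{\pm}$, so the two inequalities are obtained as one-sided directional derivatives, producing $I_0'(u_0)u_0 \geq \int_\Omega f(u_0)u_0\,dx$ and the reverse; homogeneity enters only afterwards, through the identity $I_0'(u_0)u_0 = I_0(u_0)$ from (\ref{derivadaI_0}). You instead take the finite test points $v = 0$ and $v = 2u_0$ and use the positive $1$-homogeneity $I_0(2u_0) = 2I_0(u_0)$ up front, which lets you skip the limit entirely. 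Your version is slightly more elementary for this particular functional, but, as you yourself observe, it is tied to the homogeneity of $I_0$: the paper's limit-based argument is the one that transfers verbatim to the mean-curvature functional $\tilde{I}_0$ in Section \ref{meancurvaturesection}, where the analogous lemma is claimed with a ``totally analogous'' proof and where $\tilde{I}_0'(u)u \neq \tilde{I}_0(u)$. So the paper's route buys uniformity across both applications, while yours buys a cleaner computation in the $\Delta_1$ case. One last cosmetic point common to both: the conclusion should read $u_0 \in \mathcal{N}$ (the statement's ``$u \in \mathcal{N}$'' is a typo).
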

\begin{proof}
If $0 \in \partial \Phi(u_0)$, then
$$I_0(v) - I_0(u_0) \geq \int_\Omega f(u_0)(v-u_0)dx, \quad \forall v \in BV(\Omega).$$
For $t > 0$, by taking $v = u_0 + tu_0$ in the last expression and calculating the limit as $t \to 0^+$ we get
$$I_0(u_0) = \lim_{t \to 0^+}\frac{I_0(u_0 + tu_0) - I_0(u_0)}{t} \geq \int_\Omega f(u_0)u_0 dx.$$
Doing the same for $t < 0$ we get
$$I_0(u_0) = \lim_{t \to 0^-}\frac{I_0(u_0 + tu_0) - I_0(u_0)}{t} \leq \int_\Omega f(u_0)u_0 dx$$
from where it follows the equality in both expressions above. Hence $u_0 \in \mathcal{N}$.
\end{proof}

Note that by the last result, if we manage to prove that the infimum of $\Phi$ in $\mathcal{N}$ is achieved and it is a critical point, then we would get a nontrivial critical point of $\Phi$ with lowest energy among all nontrivial ones, then, it would be a ground state bounded variation solution of (\ref{P1laplaciano}). In order to do so, let us verify that $\Phi$ satisfies all the conditions of Theorem \ref{theorem1}.

Note that in Remark \ref{lowersemicontinuousI_0} we have proved that $I_0$ satisfies $i)$ of Theorem \ref{theorem1}.

For $ii)$, first of all note that $(f_2)$ and $(f_3)$ imply that for all $\epsilon > 0$, there exists $C_\epsilon$ such that
\begin{equation}
|F(s)| \leq \epsilon|s| + C_\epsilon|s|^p, \quad \mbox{for all $s \in \mathbb{R}$.}
\label{fepsilon}
\end{equation}
Then, the embeddings of $BV(\Omega)$ and (\ref{fepsilon}) imply that
\begin{eqnarray*}
\Phi(u) & \geq &(1 - C\epsilon)\|u\| - CC_\epsilon\|u\|^p\\
&  = & \|u\|(1 - C\epsilon - Cc_\epsilon \|u\|^{p-1})\\
& = & \rho(1 - C\epsilon - CC_\epsilon \rho^{p-1}) =: \alpha_0 > 0 = \Phi(0),
\end{eqnarray*}
where $\|u\| = \rho$ and $\epsilon$, $\rho$ are positive and small enough.

By definition of $\Phi$ it follows that $iii)$ holds, since in this case we have the equality being satisfied.

In order to verify $iv)$, just note that 
$$t \mapsto I'(tu)u = \int_{\Omega}f(tu)udx$$
is increasing in $(0, +\infty)$ by $(f_5)$. Also, by (\ref{derivadaI_0}), 
$$t \mapsto I_0'(tu)u = I_0(u)$$
is constant, then a non-decreasing function in $(0,+\infty)$.

Finally, to verify $v)$, let $(v_n) \subset BV(\Omega)$ and $v \in L^p(\Omega)\backslash\{0\}$ such that $v_n \to v$ in $L^p(\Omega)$. Since $\Phi(u) = \|u\| - \int_\Omega F(u)dx$, it is enough to prove that
$$\lim_{t \to \infty}\int_\Omega \frac{F(tv_n)}{t} = + \infty \quad \mbox{uniformly in $n \in \mathbb{N}$.}$$
Let $\Gamma = \{x \in \Omega; \, v(x) \neq 0\}$ and note that $|\Gamma| > 0$. Then by Fatou Lemma, it follows that, for all $t > 0$,
$$\int_\Gamma \frac{F(tv)}{t}dx \leq \liminf_{n \to \infty}\int_\Omega \frac{F(tv_n)}{t}dx.$$
Then, by $(f_4)$ we have that
$$\liminf_{t \to \infty} \liminf_{n \to \infty}\int_\Omega \frac{F(tv_n)}{t}dx \geq \liminf_{t \to \infty} \int_\Gamma \frac{F(tv)}{t}dx  = +\infty .$$
But this means that for every $M > 0$, there exist $t_0 > 0$ and $n_0 \in \mathbb{N}$ such that
$$\int_\Omega \frac{F(tv_n)}{t}dx \geq M, \quad \mbox{for every $t > t_0$ and $n \geq n_0$,}$$
which proves $v)$.

Now, since all conditions of Theorem \ref{theorem1} are satisfied, it follows the existence of $u_0 \in \mathcal{N}$ such that
$$\Phi(u_0) = \inf_{v \in \mathcal{N}} \Phi(v).$$
Moreover, by Theorem \ref{theorem3}, it follows that $u_0$ is a critical point to $\Phi$ and then, a ground state bounded variation solution of (\ref{P1laplaciano}).

\subsection{A problem involving the mean curvature operator}
\label{meancurvaturesection}
In this section we deal with the following prescribed mean-curvature problem

\begin{equation}
\left\{
\begin{array}{rl}
\displaystyle -\mbox{div}\left(\frac{\nabla u}{\sqrt{1+|\nabla u|^2}}\right) & =  \lambda f(u) \quad \mbox{in
$\Omega$,}\\
u & = 0 \quad \mbox{on
$\partial\Omega$,}\\
\end{array} \right.
\label{Pcurvaturamedia}
\end{equation}
where $\Omega \subset \mathbb{R}^N$ is a smooth bounded domain, $N \geq 2$ and $f: \mathbb{R} \to \mathbb{R}$ is assumed to satisfy the following set of assumptions
\begin{itemize}
\item [$(f_1)$] $f \in C^0(\mathbb{R})$;
\item [$(f_2)$] $\displaystyle f(s) = o(1)$, as $|s| \to 0$;
\item [$(f_3)$] there exists constants $c_1, c_2 > 0$ and $p \in (1,1^*)$ such that
$$|f(s)| \leq c_1 + c_2|s|^{p-1}, \quad s \in \mathbb{R};$$
\item [$(f_4)$] $\displaystyle \lim_{t \to \pm \infty}\frac{F(t)}{t} = \pm \infty$;
\item [$(f_5)$] $f$ is increasing for $s \in \mathbb{R}$.
\end{itemize}

Let us consider $\tilde{I}_0, I: BV(\Omega) \to \mathbb{R}$ given by (\ref{I_0tilde}) and (\ref{F}), respectively and let us define $\Phi: BV(\Omega) \to \mathbb{R}$ by 
$$\Phi(u) = \tilde{I}_0(u) - \lambda I(u).$$

Note that $\tilde{I}_0$ is Lipschitz continuous in $BV(\Omega)$ and $I \in C^1(BV(\Omega))$. Moreover, $BV(\Omega)$ is compactly embedded into $L^p(\Omega)$ and $I(0) = 0$. Note also that
\begin{equation}
\|u\| \leq \int_{\Omega}  \sqrt{1 + |Du|^2} + \int_{\partial\Omega} |u|d\mathcal{H}_{N-1} \leq \|u\| + |\Omega|.
\label{I_0estimate}
\end{equation}
Moreover, for all $u \in BV(\Omega)$ and $s \in \mathbb{R}$, by (\ref{derivadatildeI_0}) we have that
\begin{equation}
\label{derivadaI_0tilde}
\begin{array}{lll}
\displaystyle \tilde{I}_0'(su)u & = &\displaystyle \lim_{t \to 0} \frac{\tilde{I}_0(su + tu) - \tilde{I}_0(su)}{t}\\
& = & \displaystyle\int_\Omega \frac{(D(su))^a(Du)^a}{\sqrt{1 + |(D(su))^a|^2}}dx + \int_\Omega \frac{D(su)}{|D(su)|}(x)\frac{Du}{|Du|}(x)|Du|^s + \int_{\partial \Omega}\mbox{sgn}(su) u d\mathcal{H}_{N-1}\\
& = & \displaystyle \int_\Omega \frac{((Du)^a)^2}{\sqrt{\frac{1}{s^2} + |(Du)^a|^2}}dx + \int_\Omega|(Du)^s| + \int_{\partial \Omega}|u|d\mathcal{H}_{N-1}.
\end{array}
\end{equation}

Let us define the Nehari set by
\begin{eqnarray*}
\mathcal{N} & = & \left\{u \in BV(\Omega)\backslash\{0\}; \, \tilde{I}_0'(u)u = \lambda I'(u)u \right\}\\
& = & \left\{u \in BV(\Omega)\backslash\{0\}; \, \int_{\Omega} \frac{((Du)^a)^2}{\sqrt{1+ ((Du)^a)^2}} + \int_{\Omega}|Du|^s + \int_{\partial\Omega} |u|d\mathcal{H}_{N-1} =  \lambda \int_\Omega f(u)udx \right\}\\
\end{eqnarray*}

In the following result we state that all nontrivial critical points of $\Phi$ belong to $\mathcal{N}$. Its proof is totally analogous of Lemma \ref{criticoNehari}.

\begin{lemma}
If $u_0 \in BV(\Omega)$, $u_0 \neq 0$ and $0 \in \partial \Phi(u_0)$, then $u \in \mathcal{N}$.
\end{lemma}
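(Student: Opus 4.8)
The plan is to mimic the proof of Lemma \ref{criticoNehari}, exploiting the convexity of $\tilde{I}_0$ together with the fact that the two-sided directional derivative $\tilde{I}_0'(u_0)u_0$ exists by Anzellotti's formula (\ref{derivadatildeI_0}). The one structural difference from the $\Delta_1$ case is that $\tilde{I}_0$ is no longer positively homogeneous of degree one, so I cannot replace $\tilde{I}_0'(u_0)u_0$ by $\tilde{I}_0(u_0)$ as was done there; instead I must keep the directional derivative as it stands and squeeze it between two one-sided estimates.

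First I would translate the hypothesis $0 \in \partial \Phi(u_0)$ into a subdifferential inequality. Since $I \in C^1(BV(\Omega))$ and $\tilde{I}_0$ is convex, $0 \in \partial \Phi(u_0)$ is equivalent to $\lambda I'(u_0) \in \partial \tilde{I}_0(u_0)$, which by the very definition of the subdifferential of a convex functional is precisely (\ref{BVsolutionMC}):
$$\tilde{I}_0(v) - \tilde{I}_0(u_0) \geq \lambda I'(u_0)(v - u_0), \quad \forall v \in BV(\Omega).$$
Next I would test this inequality along the ray through $u_0$. Taking $v = u_0 + t u_0$ with $t > 0$ and dividing by $t$ gives
$$\frac{\tilde{I}_0(u_0 + t u_0) - \tilde{I}_0(u_0)}{t} \geq \lambda I'(u_0)u_0,$$
and letting $t \to 0^+$ produces $\tilde{I}_0'(u_0)u_0 \geq \lambda I'(u_0)u_0$. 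Repeating the computation with $t < 0$ and dividing by the negative number $t$ reverses the inequality, so that after $t \to 0^-$ one obtains $\tilde{I}_0'(u_0)u_0 \leq \lambda I'(u_0)u_0$. Combining the two yields $\tilde{I}_0'(u_0)u_0 = \lambda I'(u_0)u_0$, that is, $u_0 \in \mathcal{N}$.

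The main, and really the only, obstacle is to justify that the two one-sided limits above coincide and equal the single number $\tilde{I}_0'(u_0)u_0$ that appears in the definition of $\mathcal{N}$. Here I would invoke Anzellotti's theorem as recorded in (\ref{derivadatildeI_0}): the admissibility conditions for differentiating $\tilde{I}_0$ at $u_0$ in the direction $u_0$, namely that $|Du_0|^s$ be absolutely continuous with respect to $|Du_0|^s$ and that $u_0$ vanish $\mathcal{H}_{N-1}$-a.e. on the set $\{x \in \partial\Omega : u_0(x) = 0\}$, are trivially satisfied when the point and the direction coincide. Hence the two-sided derivative $\tilde{I}_0'(u_0)u_0$ exists and is given explicitly by (\ref{derivadaI_0tilde}), so that the left and right difference quotients share a common limit. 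The squeeze then closes and the equality defining membership in $\mathcal{N}$ follows.
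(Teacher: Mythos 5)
Your proposal is correct and follows exactly the route the paper intends: the paper gives no separate proof for this lemma, stating only that it is ``totally analogous'' to Lemma \ref{criticoNehari}, and your argument is precisely that analogue --- testing the convex subdifferential inequality (\ref{BVsolutionMC}) with $v = u_0 + tu_0$ for $t>0$ and $t<0$ and squeezing. Your extra care in noting that $\tilde{I}_0'(u_0)u_0$ cannot be replaced by $\tilde{I}_0(u_0)$ (no homogeneity) and in checking that Anzellotti's admissibility conditions hold trivially when the direction equals the base point is a welcome refinement of what the paper leaves implicit.
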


As in the case of Section \ref{Section1Laplaciano}, if we manage to prove that the infimum of $\Phi$ in $\mathcal{N}$ is achieved and it is a critical point, then we would get a nontrivial critical point of $\Phi$ with lowest energy, then, it would be a ground state bounded variation solution of the prescribed mean-curvature problem (\ref{Pcurvaturamedia}). 

Note that in Lemma \ref{I_0tildesemicontinuo} we have proved that $\tilde{I}_0$ satisfies $i)$ in Theorem \ref{theorem1}.

For $ii)$, by Jensen's inequality, (\ref{fepsilon}), the embeddings of $BV(\Omega)$ and the Trace Theorem in $BV(\Omega)$, it follows that for all $u \in BV(\Omega)$ with $\|u\| = 1$,
\begin{eqnarray*}
\Phi(u) & \geq & \int_\Omega \sqrt{1 + ((Du)^a)^2} dx + \int_\Omega |Du|^s - \lambda C\epsilon \|u\| - \lambda CC_\epsilon\|u\|^p\\
& \geq & \sqrt{|\Omega|^2 + \left(\int_\Omega|(Du)^a|dx\right)^2}  + \int_\Omega |Du|^s - \lambda C\epsilon \|u\| - \lambda CC_\epsilon\|u\|^p\\
& \geq & \sqrt{|\Omega|^2 + \left(\int_\Omega|(Du)^a|dx\right)^2 + \left(\int_\Omega |Du|^s\right)^2} - \lambda C\epsilon \|u\| - \lambda CC_\epsilon\|u\|^p\\
& \geq & \sqrt{|\Omega|^2 + \frac{1}{4}\left(\int_\Omega|(Du)^a|dx + \int_\Omega |Du|^s\right)^2} - \lambda C\epsilon \|u\| - \lambda CC_\epsilon\|u\|^p\\
& \geq & \sqrt{|\Omega|^2 + C\|u\|^2} - \lambda C\epsilon \|u\| - \lambda CC_\epsilon\|u\|^p\\
& = & \sqrt{|\Omega|^2 + C} - \lambda(C\epsilon + CC_\epsilon)\\
& \geq & \alpha_0 > |\Omega| = \Phi(0),
\end{eqnarray*}
if $\lambda > 0$ is small enough. This proves $ii)$.

Note that (\ref{I_0estimate}) and the definition of $\Phi$, $\tilde{I}_0$ and $I$ imply that
$$\Phi(u) \geq \|u\| - \lambda I(u), \quad \forall u \in BV(\Omega),$$
proving that $iii)$ is satisfied.

For $iv)$, note that again, since $f$ in this section satisfies the same assumptions which those of Section \ref{Section1Laplaciano}, it follows that $ t \mapsto I'(tu)u$ is increasing in $(0,+\infty)$. For $\tilde{I}_0$, since
$$
\tilde{I}_0'(tu)u = \int_\Omega \frac{((Du)^a)^2}{\sqrt{\frac{1}{t^2} + |(Du)^a|^2}}dx + \int_\Omega|Du|^s + \int_{\partial \Omega}|u|d\mathcal{H}_{N-1},
$$
a simple analysis shows that $t \mapsto \tilde{I}_0'(tu)u$ is increasing and hence, a non-decreasing function in $(0,+\infty)$. Then $iv)$ in fact holds.

Finally, to verify $v)$, we just apply the same arguments employed in Section \ref{Section1Laplaciano}, together with (\ref{I_0estimate}).

Now, since all conditions of Theorem \ref{theorem1} are satisfied, it follows the existence of $u_0 \in \mathcal{N}$ such that
$$\Phi(u_0) = \inf_{v \in \mathcal{N}} \Phi(v).$$
Moreover, by Theorem \ref{theorem3}, it follows that $u_0$ is a critical point to $\Phi$ and then, a ground state bounded variation solution of (\ref{Pcurvaturamedia}).

\section{Appendix}

In the next result, we suppose that the nonlinearity satisfy $(f_1)-(f_5)$ and the additional hypothesis that $f \in C^1(\mathbb{R})$. In this situation, we can use a tool which has proven to be very effective in a number of situations when dealing with $-\Delta$ or $-\Delta_p$ operators and we think it has been underutilized in problems involving operators defined in the space of functions with bounded variation. In fact we are talking about the Lagrange multipliers rule for locally Lipschitz functionals like stated in \cite{Clarke}.

In fact the following proof just can be applied to the $1-$Laplacian operator, since in this case we have that $I_0'(u)u = I_0(u)$.

\begin{proposition}
Suppose that $f$ satisfy $(f_1)-(f_5)$ and moreover, that $f \in C^1(\mathbb{R})$. 
Let $I_0, I, \Phi: BV(\Omega) \to \mathbb{R}$, $I(u) = \int_\Omega F(u)dx$, $I_0$ like in Section \ref{Section1Laplaciano} and $\Phi = \bar{I}_0 - I$. Defining 
$$\mathcal{N} = \{u \in BV(\Omega)\backslash\{0\}; \, I'_0(u)u = I'(u)u\},$$
if $u_0 \in \mathcal{N}$ is such that
$$\Phi(u_0) = \inf_{v \in \mathcal{N}}\Phi(v),$$
then $0 \in \partial \Phi(u_0)$, i.e., $u_0$ is a critical point of $\Phi$ in $BV(\Omega)$.
\end{proposition}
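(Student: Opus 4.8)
The plan is to realize $\mathcal{N}$ as the zero level set of a single locally Lipschitz constraint and then invoke the generalized Lagrange multiplier rule of \cite{Clarke}. Since we are in the $1$-Laplacian setting, $I_0'(u)u = I_0(u)$, so I would define $G : BV(\Omega) \to \mathbb{R}$ by
$$G(u) = I_0'(u)u - I'(u)u = I_0(u) - \int_\Omega f(u)u\, dx,$$
so that $\mathcal{N} = \{u \neq 0 : G(u) = 0\}$. The extra hypothesis $f \in C^1(\mathbb{R})$ is exactly what ensures that $A(u) := \int_\Omega f(u)u\, dx$ is of class $C^1$, with $A'(u)v = \int_\Omega (f'(u)u + f(u))v\, dx$. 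Hence $G = I_0 - A$ is the difference of the convex, locally Lipschitz functional $I_0$ and a $C^1$ functional, so $G$ is locally Lipschitz. By (\ref{Neharibaixo}) the origin is bounded away from $\mathcal{N}$, so near $u_0$ the set $\{G = 0\}$ coincides with $\mathcal{N}$ and $u_0$ is a genuine constrained local minimizer of $\Phi$.

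Next I would apply Clarke's multiplier rule: since $u_0$ minimizes the locally Lipschitz $\Phi$ subject to $G(u) = 0$, there exist $\lambda \geq 0$ and $\mu \in \mathbb{R}$, not both zero, with $0 \in \partial(\lambda \Phi + \mu G)(u_0)$. Because $I_0$ is the only nonsmooth ingredient while $I, A \in C^1$, I would use the calculus of the generalized gradient to write $\lambda\Phi + \mu G = (\lambda+\mu)I_0 - \lambda I - \mu A$ and collapse this into the statement that there is $\zeta \in \partial I_0(u_0)$ (the convex subdifferential, $I_0$ being convex) with
$$(\lambda + \mu)\,\zeta = \lambda\, I'(u_0) + \mu\, A'(u_0).$$

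The key step is then to eliminate the constraint multiplier by pairing this identity with $u_0$. Here I would use three facts: the Euler relation $\langle \zeta, u_0 \rangle = I_0(u_0)$, valid for every $\zeta \in \partial I_0(u_0)$ because $I_0$ is convex, positively $1$-homogeneous and $I_0(0)=0$ (apply the subgradient inequality at $v = 0$ and at $v = 2u_0$); the Nehari identity $I'(u_0)u_0 = I_0(u_0)$; and $A'(u_0)u_0 = \int_\Omega f'(u_0)u_0^2\, dx + I_0(u_0)$. Substituting these, the terms $(\lambda+\mu)I_0(u_0)$ cancel and one is left with
$$0 = \mu \int_\Omega f'(u_0)u_0^2\, dx.$$
Once I know that $\int_\Omega f'(u_0)u_0^2\, dx \neq 0$, this forces $\mu = 0$; then $(\lambda,\mu)\neq(0,0)$ gives $\lambda > 0$, and dividing by $\lambda$ yields $0 \in \partial\Phi(u_0)$, that is $I'(u_0) \in \partial I_0(u_0)$, which is precisely (\ref{BVsolution}).

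I expect the genuine obstacle to be the non-degeneracy $\int_\Omega f'(u_0)u_0^2\, dx \neq 0$, equivalently $G'(u_0;u_0) \neq 0$: this single inequality is what simultaneously rules out the abnormal case and kills $\mu$. Since $(f_5)$ yields $f' \geq 0$ the integral is nonnegative, and it is the strict monotonicity of $f$ (exactly the property that makes $t \mapsto I'(tu_0)u_0$ strictly increasing, so that $t=1$ is the unique Nehari time on the ray through $u_0$) that I would invoke to upgrade this to a strict inequality, together with $u_0 \neq 0$. Making this rigorous in the $BV$ setting, where $u_0$ may be piecewise constant so that the range of $u_0$ could conceivably meet the zero set of $f'$, is the delicate point; everything else is routine bookkeeping with the generalized gradient.
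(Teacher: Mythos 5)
Your proposal follows essentially the same route as the paper's proof: both realize $\mathcal{N}$ as the zero set of the locally Lipschitz constraint $\mathcal{G}(u)=I_0'(u)u-I'(u)u=I_0(u)-\int_\Omega f(u)u\,dx$, invoke the Lagrange multiplier rule of \cite{Clarke}, and reduce everything to the identity $\int_\Omega f'(u_0)u_0^2\,dx=0$ being impossible. The only structural difference is one of packaging: the paper runs a two-stage elimination (first assume the multiplier of $\Phi$ vanishes and derive $\int_\Omega f'(u_0)u_0^2\,dx=0$ from the subgradient inequality for $\mathcal{G}$ alone, then normalize that multiplier to $1$ and repeat the computation for the sum to kill $\Lambda$), whereas you pair the single inclusion $(\lambda+\mu)\zeta=\lambda I'(u_0)+\mu A'(u_0)$ with $u_0$ once, using the Euler relation $\langle\zeta,u_0\rangle=I_0(u_0)$ and the Nehari identity, and obtain $\mu\int_\Omega f'(u_0)u_0^2\,dx=0$ in one shot; this is a mild streamlining, and your derivation of the Euler relation from homogeneity (testing the subgradient inequality at $v=0$ and $v=2u_0$) is equivalent to the paper's device of testing at $v=u_0+tu_0$ and letting $t\to 0^{\pm}$. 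Concerning the point you flag as delicate: the paper does not do more than you do. It derives $\int_\Omega f'(u_0)u_0^2\,dx=0$ and simply declares that this "contradicts $(f_5)$". Your observation that a strictly increasing $C^1$ function can have $f'(c)=0$ at isolated values $c$, while a nonzero $BV$ minimizer could in principle be (say) a multiple of a characteristic function taking exactly such a value, is a legitimate subtlety that the paper's proof leaves unaddressed as well; so your argument is complete to exactly the same extent as the published one, and closing that non-degeneracy gap would in fact strengthen the paper.
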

\begin{proof}
First of all note that since $f \in C^1(\mathbb{R})$, then $I \in C^2(BV(\Omega))$. From \cite{Clarke}[Corollary 1], it follows that there exist $\Lambda, \mu \in \mathbb{R}$, not both equal to zero, such that
$$0 \in \mu \partial \Phi(u_0) + \Lambda \partial \mathcal{G}(u_0),$$\
where $\mathcal{G}: BV(\Omega) \to \mathbb{R}$ is a locally Lipschitz functional defined by 
$$
\mathcal{G}(u) = I_0'(u)u - I'(u)u = I_0(u) - I'(u)u.
$$

We claim that $\mu \neq 0$. In fact, on the contrary, if $\mu = 0$, then $\Lambda \neq 0$ and $0 \in \Lambda \partial \mathcal{G}(u_0)$. Then $0 \in \partial \mathcal{G}(u_0)$ and we have that
\begin{equation}
I_0(v) - I_0(u_0) \geq \int_\Omega\left(f'(u_0)u_0 + f(u_0)\right)(v - u_0)dx.
\label{eq1}
\end{equation}
For $t > 0$, by taking $v = u_0 + t u_0$ in (\ref{eq1}) and calculating the limit as $t \to 0^+$, we have
$$I_0'(u_0)u_0 = \lim_{t \to 0^+}\frac{I_0(u_0 + tu_0) - I_0(u_0)}{t} \geq \int_\Omega \left(f'(u_0)u_0^2 + f(u_0)u_0\right) dx.$$
Doing the same for $t < 0$ and $v = u_0 + t u_0$ in (\ref{eq1}), it follows that
$$I_0'(u_0)u_0 = \lim_{t \to 0^-}\frac{I_0(u_0 + tu_0) - I_0(u_0)}{t} \leq \int_\Omega \left(f'(u_0)u_0^2 + f(u_0)u_0\right) dx.$$
Then it follows that 
$$I_0'(u_0)u_0 = \int_\Omega \left(f'(u_0)u_0^2 + f(u_0)u_0\right)dx.$$
Since $I'(u_0)u_0 = \int_\Omega f(u_0)u_0dx$, it follows that
$$\int_\Omega f'(u_0)u_0^2 dx = 0,$$
which contradicts $(f_5)$. Hence in fact $\mu \neq 0$.

Since $\mu \neq 0$, we can suppose without lack of generality that $\mu = 1$ and then $0 \in \partial \Phi(u_0) + \Lambda \partial \mathcal{G}(u_0)$.

Now let us prove that $\Lambda  = 0$. 
Note that $0 \in \partial \Phi(u_0) + \Lambda \partial \mathcal{G}(u_0)$ is equivalent to the existence of $z_1^*, z_2^* \in (BV(\Omega))'$ such that $z_1^* \in \partial \Phi(u_0)$ and $z_2^* \in \Lambda\partial \mathcal{G}(u_0)$, such that $z_1^* + z_2^* = 0$ in $(BV(\Omega))'$. Then it follows that
\begin{equation}
I_0(v) - I_0(u_0) - \int_\Omega f(u_0)(v-u_0)dx \geq \langle z_1^*, v-u_0 \rangle, \quad \forall v \in BV(\Omega)
\label{eq2}
\end{equation}
and
\begin{equation}
\Lambda(I_0(v) - I_0(u_0)) - \Lambda\int_\Omega (f'(u_0)u_0 + f(u_0))(v - u_0)dx \geq \langle z_2^*, v - u_0 \rangle, \quad \forall v \in BV(\Omega).
\label{eq3}
\end{equation}

Now adding up (\ref{eq2}) and (\ref{eq3}) we get
\begin{equation}
(1+\Lambda)(I_0(v) - I_0(u_0)) \geq \int_\Omega ((1+\Lambda)f(u_0) + \Lambda f'(u_0)u_0)(v - u_0)dx, \quad \forall v \in BV(\Omega)
\label{eq4}
\end{equation}

For $t > 0$, by taking $v = u_0 + t u_0$ in (\ref{eq4}), dividing both sides by $t$ and calculating the limit as $t \to 0^+$, we have
$$(1+\Lambda) I_0(u_0) \geq \int_\Omega ((1+\Lambda)f(u_0) + \Lambda f'(u_0)u_0)u_0dx.$$
Doing the same for $t < 0$ we get
$$(1+\Lambda) I_0(u_0) \leq  \int_\Omega ((1+\Lambda)f(u_0) + \Lambda f'(u_0)u_0)u_0dx,$$
which implies that
$$(1+\Lambda) I_0(u_0) = \int_\Omega ((1+\Lambda)f(u_0) + \Lambda f'(u_0)u_0)u_0dx.$$
Since $u_0 \in \mathcal{N}$, we have that 
$$\Lambda\int_\Omega  f'(u_0)u_0^2 dx= 0,$$
which is possible, because of $(f_5)$, just if $\Lambda = 0$.
Then $0 \in \partial \Phi(u_0)$ and the results follows.
\end{proof}

\noindent \textbf{Acknowledgment.}\  

We would like to warmly thank Prof. Olimpio Hiroshi Miyagaki for several discussions about this subject.

This work was written while Giovany M. Figueiredo was as a Visiting Professor at FCT - Unesp in Presidente Prudente - SP. He would like to thanks the warm hospitality.

\end{document}